\newtheorem{theorem}{Theorem}[section]
\newtheorem{lemma}[theorem]{Lemma}
\newtheorem{proposition}[theorem]{Proposition}
\theoremstyle{definition}
\newcommand{\ve}{\varepsilon}
\newcommand{\abs}[1]{\lvert#1\rvert}
\newcommand{\acr}{\accentset{\circ}}
\newcommand{\wt}[1]{\widetilde{#1}}
\numberwithin{equation}{section}
\numberwithin{figure}{section}
\def\ontop#1#2{\setbox0\hbox{#2}\copy0\llap{\raise\ht0\hbox{#1}}}
\begin{document}
\newcommand{\shw}{\ensuremath{\overset{\scriptsize \circ}{S}}}
\title[]{ON THE STANDARD GALERKIN
METHOD WITH EXPLICIT RK4 TIME STEPPING FOR THE SHALLOW WATER EQUATIONS.}
\author{D.C. Antonopoulos}
\address{Department of Mathematics, University of Athens, 15784 Zographou, Greece}
\email{antonod@math.uoa.gr}
\author{V.A. Dougalis}
\address{Department of Mathematics, University of Athens, 15784 Zographou, Greece, and
Institute of Applied and Computational Mathematics, FORTH, 70013 Heraklion, Greece}
\email{doug@math.uoa.gr}
\author{G. Kounadis}
\address{Department of Mathematics, University of Athens, 15784 Zographou, Greece, and
Institute of Applied and Computational Mathematics, FORTH, 70013 Heraklion, Greece}
\email{gregk@math.uoa.gr}
\subjclass[2010]{65M60, 65M12;}
\keywords{Shallow water equations, standard Galerkin finite element method,
$4^{th}$ order, $4$-stage explicit Runge-Kutta method, error estimates.}
\begin{abstract}
We consider a simple initial-boundary-value problem for the shallow water equations in one
space dimension. We discretize the problem in space by the standard Galerkin finite element 
method on a quasiuniform mesh and in time by the classical $4$-stage, $4^{th}$ order,
explicit Runge-Kutta scheme. Assuming smoothness of solutions, a Courant number restriction,
and certain hypotheses on the finite element spaces, we prove $L^{2}$ error estimates that
are of fourth-order accuracy in the temporal variable and of the usual, due to the nonuniform
mesh, suboptimal order in space. We also make a computational study of the numerical spatial
and temporal orders of convergence, and of the validity of a hypothesis made on the finite
element spaces.    
\end{abstract}
\maketitle
\section{Introduction}
In this paper we will consider the following initial-boundary-value problem (ibvp) for the
{\emph{shallow water equations} posed on the spatial interval $[0,1]$. For $T>0$ we seek
$\eta=\eta(x,t)$, $u=u(x,t)$, $0\leq x\leq 1$, $0\leq t\leq T$, such that  
\begin{align}
\begin{aligned}
\eta_{t} & + u_{x} + (\eta u)_{x} = 0, \notag\\
u_{t} & + \eta_{x} + uu_{x} = 0,\notag
\end{aligned}
\quad  0\leq x\leq 1,\,\,\, 0\leq t\leq T,
\tag{SW} \label{eqsw}
\\
\eta(x,0) =\eta_{0}(x), \quad u(x,0)=u_{0}(x), \quad 0\leq x\leq 1,\hspace{-9pt}&  \notag\\
u(0,t) = 0,\quad u(1,t)=0, \quad 0\leq t\leq T, \hspace{32pt} & \notag
\end{align}
where $\eta_{0}$, $u_{0}$ are given real-valued functions defined on $[0,1]$. The shallow
water equations approximate the Euler equations of water wave theory in the case of long
waves in a channel of finite depth. In \eqref{eqsw} the variables are nondimensional
and unscaled; $x\in [0,1]$ and $t\geq 0$ are proportional to position along the finite 
channel $[0,1]$ and time, respectively, $\eta = \eta(x,t)$ is proportional to the elevation
of the free surface above a level of rest corresponding to $\eta = 0$, and $u=u(x,t)$ is 
proportional to the depth-averaged horizontal velocity of the fluid. In these variables the
(horizontal) bottom of the channel is at a depth equal to $-1$. \par
Even if the initial conditions $\eta_{0}$ and $u_{0}$ are smooth, \eqref{eqsw} is not expected
to have global smooth solutions. There is however a local $H^{2}$ well-posedness theory; in     
\cite{pt} Petcu and Temam proved that if $u_{0}\in H^{2}\cap\acr{H}^{1}$, 
$\eta_{0}\in H^{2}$, with $1 + \eta_{0}\geq 2\alpha>0$ on $[0,1]$ for some constant
$\alpha$, then there exists a $T = T(\|u_{0}\|_{2}, \|\eta_{0}\|_{2})>0$ and a unique
solution $(\eta, u) \in L^{\infty}(0,T;H^{2}\times(H^{2}\cap\acr{H}^{1}))$ of \eqref{eqsw}
such that  $1 + \eta \geq \alpha > 0$ on $[0,1]\times[0,T]$. (Here, and in the sequel, for
integer $m\geq 0$ $H^{m}$, $\|\cdot\|_{m}$, denote the $L^{2}$-based Sobolev space of 
functions on $[0,1]$ and its associated norm, and $\acr{H}^{1}$ the subspace of $H^{1}$
whose elements are equal to zero at $x=0$, $1$. For a Banach space $X$ of functions
on $[0,1]$, $L^{\infty}(0,T;X)$ is the space of $L^{\infty}$ maps from $[0,T]$ to $X$.
\par
In the paper at hand we will approximate the solution of \eqref{eqsw} by a fully discrete scheme using the standard Galerkin finite element method for the discretization in space
with suitable finite element spaces, whose elements are at least continuously differentiable
on $[0,1]$ and are piecewise polynomial functions of degree $r-1$, $r\geq 3$, with respect
to a quasiuniform partition of $[0,1]$ of maximum meshlength $h$. Precise assumptions about the
finite element spaces will be stated in section 2. For the temporal integration we will use
the classical, four-stage, fourth-order, explicit Runge-Kutta scheme with a 
uniform time step $k$. In section 3 we analyze the spatial and temporal consistency and in section 4 the convergence of the scheme. Specifically we show that if the solution of 
\eqref{eqsw} is sufficiently smooth and $1 + \eta$ is positive in $[0,1]\times[0,T]$, there
exists a positive constant $\lambda_{0}$ such that if the Courant number $\lambda = k/h$ satisfies $\lambda \leq \lambda_{0}$, then the $L^{2}$ norm of the error of the fully
discrete approximation is of $O(k^{4} + h^{r-1})$. (It is well known that the best order of spatial accuracy one may achieve for first-order hyperbolic problems using the
standard Galerkin method on a nonuniform mesh is $r-1$ in general.). In section 5 we make
a computational study of the numerical spatial and temporal orders of convergence and of
the validity of a certain hypothesis made on the finite element spaces. 
\par
Explicit Runge-Kutta (RK) methods of higher (at least third) order of accuracy have
been widely used for the temporal discretization of ode systems obtained from spatial
discretizations of first-order hyperbolic equations. Such ode systems are usually only
mildly stiff and may be stably integrated with explicit RK schemes under Courant-number restrictions.
Regarding rigorous error estimates for fully discrete schemes of finite element-high order 
RK type we mention the paper \cite{zs1} by Zhang and Shu, who prove error estimates for
a fully discrete DG - 3$^{\mathrm{d}}$ order Shu-Osher RK scheme, cf. \cite{so}, for scalar 
conservation laws. The same authors analyze in \cite{zs2} a similar fully discrete 
scheme applied to a scalar linear hyperbolic equation with discontinuous initial condition.
In \cite{bef} Burman \emph{et al.} consider ibvp's for first-order linear hyperbolic 
problems of Friedrichs type in several space dimensions, discretized in space by a class
of symmetrically stabilized finite element methods that includes DG schemes, and in time 
by, among other, third-order accurate, explicit RK schemes, and prove $L^{2}$-error
estimates of optimal order in time and quasioptimal $(r-1/2)$ in space. In \cite{ad} two of the present authors 
proved, among other, $O(k^{3} + h^{r-1})$ $L^{2}$-error estimates for \eqref{eqsw} 
discretized by the standard Galerkin method coupled with the Shu-Osher RK scheme. 
For practical issues regarding the application of DG-high order RK schemes to nonlinear
hyperbolic systems including the shallow water equations, we refer the reader to the 
recent review papers \cite{qz} and \cite{x}, and to \cite{kyk} and its references on the
strong stability of higher order RK schemes.
\par
In addition to previously introduced notation, in the sequel we let $C^{m}=C^{m}[0,1]$,
$m=0,1,2,...$, be the space of $m$ times continuously differentiable functions on $[0,1]$.
The inner product and norm on $L^{2}=L^{2}(0,1)$ will be denoted by $(\cdot,\cdot)$,
$\|\cdot\|$, respectively, while the norms of $L^{\infty}=L^{\infty}(0,1)$ and of the
$L^{\infty}$-based Sobolev space $W^{1,\infty} = W^{1,\infty}(0,1)$ by 
$\|\cdot\|_{\infty}$, $\|\cdot\|_{1,\infty}$. We let $\mathbb{P}_{r}$ be the polynomials
of degree at most $r$.
\section{Approximation properties of the finite element spaces and preliminaries}
Let $0=x_{1}<x_{2}<\dots<x_{N+1}=1$ be a quasiuniform partition of $[0,1]$ with
$h:=\max_{i}(x_{i+1}-x_{i})$. For integers $r$, $\mu$ with $r\geq 3$ and 
$1\leq \mu\leq r-2$, let 
$S_{h}=S_{h}^{r,\mu}=\{\phi \in C^{\mu} : 
\phi\big|_{[x_{i},x_{i+1}]} \in \mathbb{P}_{r-1}\}$\,,
and $S_{h,0} = \{\phi \in S_{h},\, \phi(0) = \phi(1) = 0\}$. We will assume, cf. \cite{debf},
\cite{schr}, that if $w\in H^{s}$, $2\leq s\leq r$, there exists a $\chi \in S_{h}$,
such that
\begin{subequations}
\[
\|w-\chi\| + h\|w' - \chi'\|\ \leq Ch^{s}\|w^{(s)}\|,
\tag{2.1a}
\label{eq21a}
\]
and that if  $w \in H^{s}$ , $3\leq s\leq r$, $\chi$ satisfies in addition
\[
\|w - \chi\|_{2}  \leq C h^{s-2}\|w^{(s)}\|,
\label{eq21b}
\tag{2.1b}
\]
\end{subequations}
for some constant $C$ independent of $h$ and $w$. We will also assume that similar properties hold for $S_{h,0}$ if $w$ satisfies in addition $w(0) = w(1) = 0$. Well-known examples of spaces satisfying $(2.1a-b)$ include the Hermite piecewise polynomial
functions, for which $r=2\mu + 2$, \cite{bsw}, and the spaces of smooth  splines of even order
(i.e. piecewise polynomial of odd degree), for which $r = \mu + 2$, where $\mu \geq 2$ is even,
\cite{s}. (For smooth splines of any order $r = \mu + 2$, $\mu \geq 1$, \eqref{eq21b} holds at
least for uniform meshes, cf. e.g. \cite{bs} and its references.)
\par
Note that, as a consequence  of the quasiuniformity of the mesh, the following inverse
inequalities hold for $\chi \in S_{h}$ or $\chi \in S_{h,0}$
\begin{equation}
\begin{aligned}
\|\chi\|_{\alpha} & \leq C h^{-(\alpha - \beta)}\|\chi\|_{\beta},
\quad 0\leq \beta\leq \alpha \leq \mu + 1, \\
\|\chi\|_{j,\infty} & \leq Ch^{-(j + 1/2)}\|\chi\|, \quad 0\leq j\leq \mu,
\end{aligned}
\label{eq22}
\end{equation}
for constants $C$ independent of $h$ and $\chi$. Also, as a consequence of (2.1a-b) and the
quasiuniformity of the mesh, it follows that if $P$ is the $L^{2}$-projection operator onto
$S_{h}$, then the following hold, \cite{tw}, \cite{ddw},
\begin{subequations}
\begin{align*}
& \|Pv\|_{1} \leq C \|v\|_{1}, \quad \forall v \in H^{1},
\tag{2.3a}
\label{eq23a} \\
& \|Pv\|_{\infty} \leq C \|v\|_{\infty}, \quad \forall v \in C^{0},
\tag{2.3b}
\label{eq23b} \\
& \|Pv - v\|_{\infty} \leq C h^{r}\|v\|_{r,\infty}, \quad \forall v \in C^{r},
\tag{2.3c}
\label{eq23c}
\end{align*}
\end{subequations}
for some constants $C$ independent of $h$ and $v$. The same inequalities hold for the
$L^{2}$-projection operator $P_{0}$ onto $S_{h,0}$ when, in addition, $v(0) = v(1) = 0$.
(In the sequel we shall refer to the analogous results for $P_{0}$ on $S_{h,0}$ using the same
formula numbers, i.e. (2.3a-c).) \par
The standard Galerkin method for the semidiscretization of (SW) is defined as follows:
We seek $\eta_{h} : [0,T]\to S_{h}$, $u_{h} : [0,T]\to S_{h,0}$, such that for $t\in[0,T]$
\begin{equation}
\begin{aligned}
(\eta_{ht},\phi) & + (u_{hx},\phi) + \bigl((\eta_{h}u_{h})_{x}, \phi) = 0,
\quad \forall \phi \in S_{h},\\
(u_{h},\chi) & + (\eta_{hx},\chi) + (u_{h}u_{hx},\chi) = 0,
\quad \forall \chi \in S_{h,0},
\end{aligned}
\label{eq24}
\end{equation}
with initial conditions
\begin{equation}
\eta_{h}(0) = P \eta_{0}, \quad u_{h}(0) = P_{0}u_{0}.
\label{eq25}
\end{equation}
In \cite[Proposition 2.1]{ad} it was proved that if $(\eta,u)$, the solution of (SW), is
sufficiently smooth and satisfies $1+\eta > 0$ for $t\in [0,T]$, and if $r\geq 3$ and $h$ is
sufficiently small, then the semidiscrete ivp \eqref{eq24}-\eqref{eq25} has a unique solution
$(\eta_{h},u_{h})$ for $t\in [0,T]$ satisfying
\begin{equation}
\max_{0\leq t\leq T}(\|\eta(t) - \eta_{h}(t)\| + \|u(t) - u_{h}(t)\|) \leq C h^{r-1}.
\label{eq26}
\end{equation}
It is well known that $r-1$ is the best order of convergence in $L^{2}$ expected for the standard
Galerkin method for first-order hyperbolic problems on general quasiuniform meshes; for a uniform
mesh better rates of convergence may be obtained, \cite{d1}. For uniform meshes it was proved in
\cite{ad} that in the case of (SW) one obtains $O(h^{2})$ $L^{2}$-convergence for the semidiscrete
approximation with continuous, piecewise linear functions. In the case of the periodic ivp for the
shallow water equations the semidiscrete approximation with smooth splines on a uniform mesh gives
an optimal-order $L^{2}$ error estimate of $O(h^{r})$, cf. \cite{ad}. The assumption that $r\geq 3$
is needed in the proof of \eqref{eq26} in order to control the $W^{1,\infty}$ norm of an error term,
and was also present in the error analysis of \cite{d2} for a close relative of the SW system.
(Numerical experiments in \cite{ad} on quasiuniform meshes  suggest that \eqref{eq26} holds for
continuous, piecewise linear functions ($r=2$) as well; hence the assumption $r\geq 3$ may be
technical.) \par
In the analysis of the fully discrete scheme under consideration we will assume that
$r\geq 3$ and that the mesh is quasiuniform, so that the spatial error in $L^{2}$ will be
$O(h^{r-1})$. The emphasis of the convergence proof will be placed in getting the optimal temporal-order $L^{2}$ error estimate $O(k^{4} + h^{r-1})$. In the proof, 
the fully discrete approximations
will not be compared to the semidiscrete solution but directly to the $L^{2}$ projection of
the solution of the continuous problem (SW). Thus, the semidiscretization will not be further
utilized in this paper. \noindent
In the sequel we will assume that (SW) possesses a unique, sufficiently smooth solution $(\eta,u)$
for $0\leq t\leq T$, such that $1 + \eta \geq \alpha >0$ for $(x,t)\in [0,1]\times [0,T]$ for some
constant $\alpha$. We will denote by $C$ positive constants independent of the discretization
parameters. \par
In the proofs of sections 3 and 4 we will make use of several estimates that follow
from the assumptions on the approximation and inverse properties of the finite element spaces made
thusfar. One of them is the following {\em{superapproximation property}} of $S_{h,0}$, \cite{ddw},
\cite{d2}, according to which
\begin{equation}
\|P_{0}[(1+\eta)\xi] - (1+\eta)\xi\| \leq Ch \|\xi\|, \quad \forall \xi \in S_{h,0}.
\label{eq27}
\end{equation}
We will also use the following results, that we state as Lemmata.
\begin{lemma} Let $H=P\eta$. \\
(i)\, Then
\begin{equation}
\|P_{0}[(1+H)\xi] - (1+H)\xi\| \leq Ch\|\xi\|, \quad \forall \xi \in S_{h,0}.
\label{eq28}
\end{equation}
(ii)\, If $f\in L^{2}(0,1)$ and
\begin{equation}
((1+H)\xi,P_{0}f) = ((1+H)\xi,f) + b(\xi,f), \quad \xi \in S_{h,0},
\label{eq29}
\end{equation}
then $\abs{b(\xi,f)} \leq Ch\|\xi\| \|f\|$.
\label{L21}
\end{lemma}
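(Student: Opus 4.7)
My plan is to reduce part (i) to the given superapproximation property \eqref{eq27} by replacing the coefficient $1+\eta$ with $1+H$, and then to derive (ii) from (i) via a duality argument exploiting the self-adjointness of the $L^{2}$-projection $P_{0}$.

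For (i), I would start from the algebraic splitting
\[
P_{0}[(1+H)\xi] - (1+H)\xi \;=\; \bigl(P_{0}[(1+\eta)\xi] - (1+\eta)\xi\bigr) \;+\; (P_{0}-I)[(H-\eta)\xi].
\]
The first summand is bounded by $Ch\|\xi\|$ directly by \eqref{eq27}, since $\xi\in S_{h,0}$ and $\eta$ is smooth. For the second summand, since $P_{0}$ is an $L^{2}$-orthogonal projection one has $\|(P_{0}-I)g\|\le\|g\|$ for every $g\in L^{2}$, which I would combine with the pointwise bound $\|(H-\eta)\xi\|\le\|H-\eta\|_{\infty}\|\xi\|$. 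The factor $\|H-\eta\|_{\infty}=\|P\eta-\eta\|_{\infty}$ is controlled by (2.3c) applied to the (assumed smooth) $\eta$, giving $Ch^{r}\|\eta\|_{r,\infty}$; this is $\le Ch$ for $r\ge 3$ and $h$ bounded, so the second summand is also $\le Ch\|\xi\|$.

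For (ii), the key observation is that, by the defining relation for $b(\xi,f)$ and the self-adjointness of $P_{0}$ in $L^{2}$,
\[
b(\xi,f) \;=\; ((1+H)\xi,\, P_{0}f - f) \;=\; \bigl(P_{0}[(1+H)\xi] - (1+H)\xi,\, f\bigr),
\]
where in the second equality I used $((1+H)\xi,P_{0}f)=(P_{0}[(1+H)\xi],f)$. Applying part (i) and the Cauchy--Schwarz inequality then yields $|b(\xi,f)|\le Ch\|\xi\|\,\|f\|$.

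I do not anticipate any serious obstacle. The substantive input in (i) is the superapproximation estimate \eqref{eq27}, which is available; the coefficient swap from $1+\eta$ to $1+H$ is harmless thanks to the $L^{\infty}$ projection estimate (2.3c) combined with the assumed smoothness of $\eta$. Part (ii) is then essentially formal, resting only on the self-adjointness of $P_{0}$ and the estimate just proved in (i). The one small point requiring attention is that the argument $(1+\eta)\xi$ used in \eqref{eq27} is a legitimate function for which the stated superapproximation bound applies, but this follows since $\xi\in S_{h,0}$ vanishes at the endpoints and $1+\eta$ is smooth.
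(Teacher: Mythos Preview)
Your proof is correct and follows essentially the same approach as the paper: in part (i) the paper uses the identical decomposition $P_{0}[(1+H)\xi]-(1+H)\xi = \bigl(P_{0}[(1+\eta)\xi]-(1+\eta)\xi\bigr) + P_{0}[(H-\eta)\xi] - (H-\eta)\xi$ (your $(P_{0}-I)[(H-\eta)\xi]$ written as two separate terms), invoking \eqref{eq27} for the first piece and \eqref{eq23c} for the second; in part (ii) the paper likewise rewrites $b(\xi,f)=(P_{0}[(1+H)\xi]-(1+H)\xi,f)$ via the self-adjointness of $P_{0}$ and then applies (i) with Cauchy--Schwarz.
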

\begin{proof} (i)\, We have
\begin{align*}
P_{0}[(1+H)\xi] - (1+H)\xi & = P_{0}[(1+H)\xi] - P_{0}[(1+\eta)\xi]
+ P_{0}[(1+\eta)\xi] - (1+\eta)\xi \\
& \,\,\,\,\,\, + (1+\eta)\xi - (1+H)\xi \\
& = P_{0}[(H - \eta)\xi] + P_{0}[(1+\eta)\xi] - (1+\eta)\xi - (H-\eta)\xi,
\end{align*}
whence, from \eqref{eq23b}, \eqref{eq29},
\[
\|P_{0}[(1+H)\xi] - (1+H)\xi\| \leq C(\|H - \eta\|_{\infty}\|\xi\| + h\|\xi\|),
\]
and therefore \eqref{eq28} follows from \eqref{eq23c}. \\
(ii)\, We have
\[
b(\xi,f) = (P_{0}[(1+H)\xi],f) - ((1+H)\xi,f) = (P_{0}[(1+H)\xi] - (1+H)\xi,f),
\]
and therefore, by \eqref{eq28}, $\abs{b(\xi,f)} \leq Ch\|\xi\| \|f\|$.
\end{proof}
\begin{lemma} Let $\eta$ be the first component of the solution of
\eqref{eqsw} for which we suppose that
$1+ \eta \geq \alpha > 0 $, and $H = P\eta$. If $\eta \in C^{r}$, then for sufficiently small $h$
we have
\[
1 + H \geq \tfrac{\alpha}{2}.
\]
In addition, if $f\in L^{2}(0,1)$ then
\begin{equation}
\tfrac{\alpha}{2}\|f\|^{2} \leq ((1 + H)f,f) \leq C'\|f\|^{2},
\label{eq210}
\end{equation}
for some constant $C'$ depending on $\|\eta\|_{r,\infty}$.
\label{L22}
\end{lemma}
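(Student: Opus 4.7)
The plan is a short two-step argument that exploits only the $L^\infty$ stability and approximation properties (2.3b)-(2.3c) of the projection $P$.

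First I would show the pointwise lower bound $1+H \geq \alpha/2$. Since $\eta \in C^r$, property (2.3c) gives $\|H-\eta\|_\infty = \|P\eta - \eta\|_\infty \leq Ch^r \|\eta\|_{r,\infty}$. Choose $h$ small enough that $Ch^r\|\eta\|_{r,\infty} \leq \alpha/2$; then at every $x \in [0,1]$,
\[
1 + H(x) \geq 1 + \eta(x) - \|H-\eta\|_\infty \geq \alpha - \tfrac{\alpha}{2} = \tfrac{\alpha}{2},
\]
which is the first claim.

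For the inequality \eqref{eq210}, the lower bound is immediate from the pointwise estimate just obtained: $((1+H)f,f) \geq \tfrac{\alpha}{2}\|f\|^2$. For the upper bound I would note that $\|1+H\|_\infty \leq 1 + \|\eta\|_\infty + \|H-\eta\|_\infty \leq 1 + \|\eta\|_\infty + Ch^r\|\eta\|_{r,\infty}$, which is bounded above by a constant $C'$ depending only on $\|\eta\|_{r,\infty}$ (after fixing $h$ sufficiently small). Then $((1+H)f,f) \leq \|1+H\|_\infty \|f\|^2 \leq C'\|f\|^2$.

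There is essentially no obstacle here; the whole content is the uniform convergence $H \to \eta$ provided by (2.3c), which transfers the strict positivity of $1+\eta$ to $1+H$ for small $h$. The only care needed is to record that the constant $C'$ depends on $\|\eta\|_{r,\infty}$ (through the use of (2.3c)) and to fix the smallness of $h$ once and for all so that both the lower bound $1+H \geq \alpha/2$ and the upper bound on $\|1+H\|_\infty$ hold simultaneously.
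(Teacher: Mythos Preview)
Your proof is correct and follows essentially the same approach as the paper: both arguments use the $L^\infty$ approximation property (2.3c) to obtain $|H-\eta|\leq C_1 h^r$ pointwise, then transfer the bounds $\alpha\leq 1+\eta$ and the boundedness of $\eta$ to $1+H$ for $h$ sufficiently small.
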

\begin{proof} From \eqref{eq23c} we have
\[
1 + \eta - C_{1}h^{r} \leq 1 + H \leq 1 + \eta + C_{1}h^{r},
\]
for some constant $C_{1}$. Therefore if $h\leq (\alpha/(2C_{1}))^{1/r}$ then
$\alpha/2 \leq 1 + H \leq C^{'}$, from which \eqref{eq210} follows.
\end{proof}
For the purposes of the proof of convergence of the fully discrete scheme we will also need two more
properties of the $L^{2}$-projection operators $P$ and $P_{0}$, in addition to (2.3a-c). The first
one follows from the approximation and inverse properties of the finite element spaces already
mentioned. It expresses the fact that $P$ is {\em{stable in $H^{2}$}}, i.e. that there exists a
constant $C$ such that
\begin{equation}
\|Pv\|_{2} \leq C \|v\|_{2}, \quad \forall v \in H^{2}.
\label{eq211}
\end{equation}
In addition, the analogous stability estimate holds for $P_{0}$ on $v \in H^{2}\cap\acr{H}^{1}$.
It is straightforward to check that \eqref{eq211} follows from the hypotheses on $S_{h}$ made thusfar. Indeed, let $R_{h} : H^{1} \to S_{h}$ be the $H^{1}$-projection 
onto $S_{h}$ defined for
$w\in H^{1}$ by $(R_{h}w,\phi)_{1} = (w,\phi)_{1}$ for all $\phi\in S_{h}$ Suppose $v\in H^{2}$
and let $\psi$ be the interpolant of $v$ in the space of continuous, piecewise linear functions
defined with respect to the partition $\{x_{i}\}_{i=1}^{N+1}$. Then, by a local inverse inequality for $R_{h}v - \psi \in \mathbb{P}_{r-1}(x_{j},x_{j+1})$ and the 
quasiuniformity of the mesh we have
\[
\|(R_{h}v)''\|^{2} = \sum_{j=1}^{N}\int_{x_{j}}^{x_{j+1}}((R_{h}v)'' - \psi'')^{2}
\leq Ch^{-2}\sum_{j=1}^{N}\int_{x_{j}}^{x_{j+1}}((R_{h}v)' - \psi')^{2}.
\]
Hence $\|(R_{h}v)''\| \leq Ch^{-1}\|(R_{h}v)' - \psi'\|
\leq Ch^{-1}(\|R_{h}v - v\|_{1} + \|v - \psi\|_{1})$. Since
$\|R_{h}v - v\|_{1} \leq C\inf_{\chi\in S_{h}}\|v - \chi\|_{1}\leq Ch\|v\|_{2}$,
by \eqref{eq21a}, it follows that $\|(R_{h}v)''\| \leq C\|v\|_{2}$, from which the stability
of $R_{h}$ in $H^{2}$ follows in view of the fact that $\|R_{h}v\|_{1} \leq \|v\|_{1}$, $v\in H^{1}$.
Finally,
\begin{align*}
\|Pv\|_{2} & \leq \|Pv - R_{h}v\|_{2} + \|R_{h}v\|_{2} \leq Ch^{-2}\|P(v - R_{h}v)\|
+ C\|v\|_{2} \\
& \leq Ch^{-2}\|v - R_{h}v\| + C\|v\|_{2} \leq C\|v\|_{2}.
\end{align*}
(In the final step we used \eqref{eq21a} for $s=2$). \par
In addition, in the course of the proof of the consistency estimates of the fully discrete scheme in Proposition \ref{P31} in section 3 we will need the property that 
if $v \in H^{s}$, $s\geq 3$, is independent of $h$, then
\begin{equation}
\|Pv\|_{3}\leq C_{s}(v),
\label{eq212}
\end{equation}
where $C_{s}(v)$ is a constant depending only on $v$ and $s$. We will also assume that \eqref{eq212}
holds for $P_{0}$ as well, if in addition $v(0)=v(1)=0$. This property does not follow 
from our hypotheses (2.1a-b), \eqref{eq22}. It holds for the Hermite piecewise polynomial 
functions on a
general nonuniform mesh, provided $\mu\geq 2$ (hence, for $r-1\geq 5$, i.e. for at least piecewise
quintic polynomials), cf. \cite{bsw}, and also for smooth splines if $\mu\geq 2$, i.e. for which $r-1\geq 3$, i.e. at least cubic splines. (If $r-1$ is odd, this requires just a quasiuniform mesh,
cf. \cite{s}, while if $r-1$ is even, a uniform mesh guarantees \eqref{eq212} for $\mu\geq 2$,
cf. \cite{bs}.)
\section{The fully discrete scheme and its consistency}
For a positive integer $M$, we let $k=T/M$, $t^{n}=nk$, $n=0,1,\dots,M$, and using the notation
established in Section 2 we let $H(t) = P\eta(t)$, $U(t) = P_{0}u(t)$, $H^{n}=H(t^{n})$,
$U^{n} = U(t^{n})$, where $(\eta,u)$ is the solution of (SW). We also define
\begin{align}
& \Phi = U + HU, \qquad \Phi^{n} = \Phi(t^{n}),
\label{eq31} \\
& F = H_{x} + UU_{x}, \quad F^{n} = F(t^{n}).
\label{eq32}
\end{align}
We discretize in time the ode system represented by the semidiscretization \eqref{eq24}-\eqref{eq25}
by the explicit , fourth-order accurate `classical' Runge-Kutta scheme (RK4), written as follows.
Seek $H_{h}^{n}\in S_{h}$, $U_{h}^{n}\in S_{h,0}$, $0\leq n\leq M$, and $H_{h}^{n,j}\in S_{h}$,
$U_{h}^{n,j}\in S_{h,0}$ for $j=1,2,3$, $0\leq n\leq M-1$, such that
\begin{equation}
\begin{aligned}
& H_{h}^{n,j} - H_{h}^{n} + ka_{j}P\Phi_{hx}^{n,j-1} = 0, \\
& U_{h}^{n,j} - U_{h}^{n} + ka_{j}P_{0}F_{h}^{n,j-1} = 0,
\end{aligned}
\label{eq33}
\end{equation}
for $j=1,2,3$, and
\begin{equation}
\begin{aligned}
& H_{h}^{n+1} - H_{h}^{n}  + kP\biggl[\sum_{j=1}^{4}b_{j}\Phi_{h}^{n,j-1}\biggr]_{x} = 0,\\
& U_{h}^{n+1} - U_{h}^{n} + kP_{0}\biggl[\sum_{j=1}^{4}b_{j}F_{h}^{n,j-1}\biggr] = 0,
\end{aligned}
\label{eq34}
\end{equation}
where
\begin{equation}
\begin{aligned}
& \Phi_{h}^{n,j} = U_{h}^{n,j} + H_{h}^{n,j}U_{h}^{n,j}, \\
& F_{h}^{n,j} = H_{hx}^{n,j} + U_{h}^{n,j}U_{hx}^{n,j},
\end{aligned}
\label{eq35}
\end{equation}
for $j=0,1,2,3$ and
\[
H_{h}^{n,0} = H_{h}^{n}, \quad U_{h}^{n,0} = U_{h}^{n}, \quad
a_{1}=a_{2} = 1/2, \quad a_{3}=1, \quad b_{1}=b_{4}=1/6, \quad b_{2}=b_{3}=1/3,
\]
with
\begin{equation}
H_{h}^{0} = \eta_{h}(0)=P\eta_{0},\quad U_{h}^{0} = u_{h}(0)=P_{0}u_{0},
\label{eq36}
\end{equation}
In order to study the temporal consistency of the scheme \eqref{eq33}-\eqref{eq36} we define the
intermediate stages $V^{n,j}\in S_{h}$, $W^{n,j}\in S_{h,0}$, $j=0,1,2,3$, $0\leq n\leq M-1$, by
the equations
\begin{align}
V^{n,j} - H^{n} + ka_{j}P\Phi_{x}^{n,j-1} & =0,
\label{eq37} \\
W^{n,j} - U^{n} + ka_{j}P_{0}F^{n,j-1} & = 0,
\label{eq38}
\end{align}
and
\[
V^{n,0} = H^{n}, \quad W^{n,0} = U^{n},
\]
where
\begin{align}
\Phi^{n,j} & = W^{n,j} + V^{n,j}W^{n,j},
\label{eq39} \\
F^{n,j} & = V_{x}^{n,j} + W^{n,j}W_{x}^{n,j},
\label{eq310}
\end{align}
for $j=0,1,2,3$, with $\Phi^{n,0} = \Phi^{n}$, $F^{n,0} = F^{n}$. \par
We first estimate the continuous spatial truncation error resulting from replacing 
$\eta$ and $u$ in (SW) by their $L^{2}$ projections on the finite element spaces. 
In the sequel we assume that the solution $(\eta,u)$ of (SW) is sufficiently smooth 
for the purposes of the error estimation.
\begin{lemma} Let $(\eta,u)$ be the solution of (SW) in $[0,T]$. Let $H(t) = P\eta(t)$,
$U(t) = P_{0}u(t)$ and let $\psi(t)\in S_{h}$, $\zeta(t)\in S_{h,0}$, for $0\leq t\leq T$,
be such that
\begin{align}
H_{t} + P\bigl(U + HU\bigr)_{x} & = \psi,
\label{eq311} \\
U_{t} + P_{0}\bigl(H_{x} + UU_{x}\bigr) & = \zeta.
\label{eq312}
\end{align}
Then, for $j=0,1,2,3$, we have
\begin{equation}
\|\partial_{t}^{j}\psi\| + \|\partial_{t}^{j}\zeta\| \leq C h^{r-1},
\label{eq313}
\end{equation}
for $0\leq t\leq T$.
\label{L31}
\end{lemma}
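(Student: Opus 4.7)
The plan is to use the PDEs in (SW) to replace the time derivatives $H_t = P\eta_t$ and $U_t = P_0 u_t$ by spatial terms, so that $\psi$ and $\zeta$ become $P$ (resp.\ $P_0$) applied to an expression measuring how much the nonlinear fluxes change when $(\eta,u)$ is replaced by $(H,U)$. Concretely, since $P$ and $P_0$ commute with $\partial_t$, from $\eta_t = -(u+\eta u)_x$ one gets $H_t = -P(u+\eta u)_x$, and inserting in \eqref{eq311} yields
\[
\psi = P\bigl[(U-u) + (HU - \eta u)\bigr]_x = P\bigl[(1+\eta)\sigma + \rho U\bigr]_x,
\]
where $\rho := H - \eta = P\eta - \eta$ and $\sigma := U - u = P_0 u - u$. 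An entirely analogous manipulation with the second equation of (SW), using $u_t = -(\eta_x + uu_x)$ and $UU_x - uu_x = U\sigma_x + \sigma u_x$, gives
\[
\zeta = P_0\bigl[\rho_x + U\sigma_x + \sigma u_x\bigr].
\]

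With these formulas in hand I would appeal to the $L^2$-stability of the orthogonal projections $P$ and $P_0$ to drop them and reduce the estimate to bounding the integrands. The four resulting terms in $\psi$ (after expanding the derivative) and the three terms in $\zeta$ are each of the form ``smooth bounded coefficient'' times ``a derivative of $\rho$ or $\sigma$'' or ``$\rho$, $\sigma$ themselves in $L^\infty$''. The worst contributions are $\|(1+\eta)\sigma_x\|$ and $\|\rho_x U\|$: combining (2.1a) with $s=r$ and the inverse estimate \eqref{eq22} in the standard way yields $\|\rho\|_1 + \|\sigma\|_1 \leq Ch^{r-1}$, while the $L^\infty$ bounds $\|\rho\|_\infty, \|\sigma\|_\infty = O(h^r)$ follow from \eqref{eq23c} and the smoothness hypothesis. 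Using \eqref{eq23b} to bound $\|U\|_\infty \leq C\|u\|_\infty$ and the smoothness of $\eta,u$ to control the remaining coefficients, one gets $\|\psi\| + \|\zeta\| \leq Ch^{r-1}$.

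For the higher-$t$ derivatives, I would simply differentiate the closed-form expressions for $\psi$ and $\zeta$ in $t$, using again that $P$ and $P_0$ commute with $\partial_t$. Each time derivative distributes through products and through $\rho$ and $\sigma$ as $\partial_t^\ell \rho = P(\partial_t^\ell \eta) - \partial_t^\ell \eta$ and $\partial_t^\ell \sigma = P_0(\partial_t^\ell u) - \partial_t^\ell u$, so the very same approximation/inverse estimates apply, only now with $\partial_t^\ell \eta,\partial_t^\ell u$ in place of $\eta,u$. Because the assumed smoothness of the solution of (SW) makes all these time derivatives (and their spatial derivatives up to order $r$) uniformly bounded on $[0,T]$, the bound $\|\partial_t^j\psi\| + \|\partial_t^j\zeta\| \leq Ch^{r-1}$ for $j = 0,1,2,3$ follows.

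The main technical point, rather than an obstacle, is the systematic use of the identity $\sigma + HU - \eta u = (1+\eta)\sigma + \rho U$ (and its analogue inside $\zeta$), which is what converts the nonlinear difference into a linear combination of $\sigma$ and $\rho$ with smooth coefficients; once this rewriting is done the estimates are routine. The only place where we truly lose an order, giving $h^{r-1}$ rather than $h^r$, is the need for the $H^1$-type bound $\|\rho\|_1,\|\sigma\|_1 = O(h^{r-1})$, which is consistent with the general suboptimality noted after \eqref{eq26}.
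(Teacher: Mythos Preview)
Your proof is correct and follows essentially the same approach as the paper. The only cosmetic difference is in how the algebraic identity is organized: the paper writes (with the opposite sign convention $\rho=\eta-H$, $\sigma=u-U$) the flux difference as $\eta\sigma + u\rho - \rho\sigma$, keeping the quadratic term $\rho\sigma$ separate and bounding it by $O(h^{2r-1})$, whereas you absorb it into the coefficient by writing $(1+\eta)\sigma+\rho U$ with $U=u+\sigma$; the resulting estimates and the treatment of time derivatives are identical.
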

\begin{proof} Subtracting \eqref{eq311} from the equation
$P\bigl(\eta_{t} + u_{x} + (\eta u)_{x}\bigr) = 0$ and defining $\rho:=\eta - H$, $\sigma:= u - U$,
we have
\[
P\sigma_{x} + P\bigl((\eta u)_{x} - (HU)_{x}\bigr) = -\psi.
\]
Since $\eta u - HU = \eta u - (\eta-\rho)(u - \sigma) = \eta\sigma + u\rho - \rho\sigma$,
we see that
\[
P\bigl(\sigma_{x} + (\eta\sigma)_{x} + (u\rho)_{x} - (\rho\sigma)_{x}\bigr) = -\psi,
\]
from which, using the approximation properties of the spaces $S_{h} $, $S_{h,0}$, it follows that
\begin{align*}
\|\partial_{t}^{j}\psi\|  &  \leq \|P\partial_{t}^{j}\sigma_{x}\|
+ \|P\partial_{t}^{j}(\eta\sigma)_{x}\| + \|P\partial_{t}^{j}(u\rho)_{x}\|
+ \|P\partial_{t}^{j}(\rho\sigma)_{x}\| \\
& \leq C (h^{r-1} + h^{2r-1}) \leq C h^{r-1},
\end{align*}
for $j = 0,1,2,3$. Subtracting  \eqref{eq312} from the equation
$P_{0}\bigl(u_{t} + \eta_{x} + uu_{x}\bigr) = 0$ gives
\[
P_{0}\rho_{x} + P_{0}(uu_{x} - UU_{x}) = -\zeta.
\]
Since
$uu_{x} - UU_{x} = uu_{x} - (u - \sigma)(u_{x} - \sigma_{x})
= (u\sigma)_{x} - \sigma\sigma_{x}$,
it follows, for $j=0,1,2,3$, that
\begin{align*}
\|\partial_{t}^{j}\zeta\|  & \leq \|P_{0}\partial_{t}^{j}\rho_{x}\|
+ \|P_{0}\partial_{t}^{j}(u\sigma)_{x}\| + \|P_{0}\partial_{t}^{j}(\sigma\sigma_{x})\| \\
& \leq C (h^{r-1} + h^{2r-1}) \leq C h^{r-1},
\end{align*}
and \eqref{eq313} is proved.
\end{proof}
In the following proposition we estimate appropriately defined local errors of the fully discrete scheme \eqref{eq33}-\eqref{eq36}. The local errors $\delta_{1}^{n}\in S_{h}$,
$\delta_{2}^{n}\in S_{h,0}$ are expressed in terms of the $L^{2}$ projections $H^{n}=P\eta(t^{n})$,
$U^{n} = P_{0}u(t^{n})$, and the quantities $\Phi^{n,i}$, $F^{n,i}$ defined by \eqref{eq39},
\eqref{eq310} as nonlinear functions of the intermediate stages 
$V^{n,i}$, $W^{n,i}$ of a single step of the RK4 scheme with starting values 
$H^{n}$, $U^{n}$, cf. \eqref{eq37}, \eqref{eq38}. The plan of the error estimation is straightforward but the details of the proof are rather technical.
We find expansions of $\Phi^{n,i}$, $F^{n,i}$, for $i=1,2,3$ in powers of $k$ up to terms of
$O(k^{2})$ for $i=1$ and up to terms of $O(k^{3})$ for $i=2$ and 3, and we estimate the remainders by bounds of $O(h^{r-1} + k^{4})$ in appropriate norms. The constants in these 
error bounds depend polynomially on the Courant number $\lambda=k/h$. The expressions for $\Phi^{n,i}$, $F^{n,i}$ are combined as in the final step of the RK4 scheme to yield the required estimates of the local errors
after cancellation of the lower-order terms. \par
In bounding the remainders of $\Phi^{n,i}$, $F^{n,i}$ for $i=1,2$, use is made of the 
standard approximation and inverse properties (2.1), \eqref{eq22} of the finite element 
spaces, in particular of the stability and approximation estimates of the $L^{2}$ 
projections that follow from these properties. But in the course of bounding some terms 
of the remainders of $\Phi^{n,3}$ and $F^{n,3}$ we need to find $L^{2}$ bounds independent 
of $h$ of third-order spatial derivatives of $H_{t}$ and $U_{t}$; for this purpose we use the hypothesis that \eqref{eq212} holds.
\begin{proposition} Let $(\eta, u)$ be the solution of (SW) and let $\lambda = k / h$.
If $\delta_{1}^{n}$, $\delta_{2}^{n}$, for $0\leq n\leq M-1$, are such that
\begin{align}
\delta_{1}^{n} & = H^{n+1} - H^{n}  + k P\bigl[\sum_{j=1}^{4}b_{j}\Phi^{n,j-1}\bigr]_{x},
\label{eq314} \\
\delta_{2}^{n} & = U^{n+1} - U^{n}  + k P_{0}\bigl[\sum_{j=1}^{4}b_{j}F^{n,j-1}\bigr],
\label{eq315}
\end{align}
then, there exists a constant $C_{\lambda}$ that depends polynomially on $\lambda$ such that
\[
\max_{0\leq n\leq M-1}(\|\delta_{1}^{n}\| + \|\delta_{2}^{n}\|) \leq C_{\lambda}k(k^{4} + h^{r-1}).
\]
\label{P31}
\end{proposition}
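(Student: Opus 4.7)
The plan is to expand each intermediate quantity $\Phi^{n,j-1}$ and $F^{n,j-1}$ in powers of $k$ about $t^n$ to sufficient order, substitute the expansions into the RK4 combinations (3.14)--(3.15), and identify the cancellations guaranteed by the classical RK4 order conditions $\sum b_j = 1$, $\sum b_j a_j = 1/2$, $\sum b_j a_j^2 = 1/3$, $\sum b_j a_j^3 = 1/4$, together with their analogues for the nonlinear nesting (e.g.\ $\sum b_j a_{j-1} a_{j-2} = 1/24$). What remains after cancellation is $H(t^{n+1}) - H^n - k H_t^n - (k^2/2)H_{tt}^n - \dots - (k^4/24)\partial_t^4 H^n$, which is the Taylor remainder of size $O(k^5)$, plus spatial error contributions generated whenever $P(\Phi_x)$ and $P_0 F$ are used in place of exact time derivatives.

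Concretely, I would start from (3.11)--(3.12), rewriting $-P\Phi_x^n = H_t^n - \psi^n$ and $-P_0 F^n = U_t^n - \zeta^n$, so that (3.7)--(3.8) for $j=1$ give $V^{n,1} = H^n + (k/2)(H_t^n - \psi^n)$ and $W^{n,1} = U^n + (k/2)(U_t^n - \zeta^n)$. Substituting into (3.9)--(3.10) and Taylor-expanding $\Phi^{n,1}$ and $F^{n,1}$ produces the first-half-step Taylor approximations of $\Phi(t^n + k/2)$, $F(t^n + k/2)$ up to $O(k^2)$, modulo a remainder driven by $\psi^n,\zeta^n$ and products thereof. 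The same procedure, carried recursively to $j=2,3$, yields expansions of $\Phi^{n,2}$, $F^{n,2}$ through $O(k^3)$ and of $\Phi^{n,3}$, $F^{n,3}$ through $O(k^3)$ as well, each step introducing fresh occurrences of $\psi^{n}$, $\zeta^{n}$, $\partial_t \psi^n$, $\partial_t\zeta^n$, etc. When these expansions are weighted by $b_j$ and summed, the noncancelling terms are precisely (a) the Taylor remainder $O(k^5)$ of the exact solution, and (b) $k$ times linear combinations of $\partial_t^j \psi^n$, $\partial_t^j \zeta^n$, $j \leq 3$, bounded by $O(k\, h^{r-1})$ via Lemma \ref{L31}.

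Bounding the remainders of the Taylor expansions in $L^2$ is where care is required. Each expansion step produces $L^2$-projections of nonlinear quantities in $H$, $U$ and of their spatial derivatives; $L^2$-stability of $P$ and $P_0$, together with the $H^2$-stability \eqref{eq211} and the hypothesis \eqref{eq212}, control the $L^2$, $H^1$, $H^2$, $H^3$ norms of $H$, $U$, $H_t$, $U_t$ independently of $h$. The cubic and quartic remainders that appear in $\Phi^{n,3}$ and $F^{n,3}$ include terms like $P[V^{n,2}_x \cdot W^{n,2}]$ involving third-order spatial derivatives of $H_t$ and $U_t$; it is precisely here that \eqref{eq212} is invoked, as anticipated in the text preceding the proposition. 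Whenever a spatial derivative of a quantity lying in $S_h$ needs to be re-estimated in $L^2$, the inverse inequality \eqref{eq22} is used, and every such use is paid for by a factor of $k/h = \lambda$; since the number of such uses is bounded, $C_\lambda$ comes out a polynomial in $\lambda$.

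The main obstacle will be the bookkeeping rather than any single hard estimate: one must track, for the third and fourth stage, the exact coefficients of all lower-order Taylor terms so that the RK4 order conditions produce the $O(k^5)$ cancellation, while simultaneously splitting each remainder into a temporal part (absorbed into $k^5$) and a spatial part (absorbed into $k\, h^{r-1}$ via Lemma \ref{L31} and \eqref{eq23c}). Once these expansions are organized, applying $P$ to the first sum and $P_0$ to the second, using their $L^2$-contractivity, and collecting terms yields $\|\delta_1^n\| + \|\delta_2^n\| \leq C_\lambda k(k^4 + h^{r-1})$ uniformly in $n$, as claimed.
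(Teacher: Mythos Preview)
Your plan is correct and follows essentially the same route as the paper's proof: recursive expansion of $V^{n,j},W^{n,j}$ (and hence $\Phi^{n,j},F^{n,j}$) in powers of $k$ via \eqref{eq311}--\eqref{eq312}, identification of the remainders in terms of $\partial_t^j\psi^n,\partial_t^j\zeta^n$ controlled by Lemma~\ref{L31}, use of the RK4 coefficient identities to cancel the lower-order terms in the $b_j$-weighted sums, and Taylor's theorem for the final $O(k^5)$ bound. You also correctly anticipate that hypothesis~\eqref{eq212} enters only at the third stage (to bound $\|\partial_x^3 H_t^n\|$, $\|\partial_x^3 U_t^n\|$ in certain $O(k^4)$ remainder terms of $\psi_2^n,\zeta_2^n$), and that each application of an inverse inequality contributes a factor of $\lambda$, yielding the polynomial dependence of $C_\lambda$.
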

\begin{proof} From \eqref{eq37} it follows that
\[
V^{n,1} - H^{n} + a_{1} k P\Phi_{x}^{n} = 0.
\]
Hence, by \eqref{eq311}, \eqref{eq31},
\begin{equation}
V^{n,1} = H^{n} + a_{1}kH_{t}^{n} - a_{1}k\psi^{n}.
\label{eq316}
\end{equation}
In addition, by \eqref{eq38} and \eqref{eq312} we have
\[
W^{n,1} - U^{n} + a_{1}kP_{0}F^{n} = 0,
\]
and consequently, from \eqref{eq312}, \eqref{eq32},
\begin{equation}
W^{n,1} = U^{n} + a_{1}kU_{t}^{n} - a_{1}k\zeta^{n}.
\label{eq317}
\end{equation}
So, by  \eqref{eq316} and \eqref{eq317},
\begin{equation}
V^{n,1}W^{n,1} = H^{n}U^{n} + a_{1}k(HU)_{t}^{n} + a_{1}^{2}k^{2}H_{t}^{n}U_{t}^{n} + v_{1}^{n},
\label{eq318}
\end{equation}
where
\[
v_{1}^{n} = -a_{1}k(H^{n}\zeta^{n} + U^{n}\psi^{n}) - a_{1}^{2}k^{2}(H_{t}^{n}\zeta^{n}
+U_{t}^{n}\psi^{n} - \psi^{n}\zeta^{n}).
\]
Therefore, by \eqref{eq317}, \eqref{eq318}, we obtain
\[
W^{n,1} + V^{n,1}W^{n,1} = U^{n} + H^{n}U^{n} + a_{1}k\bigl(U_{t}^{n} + (HU)_{t}^{n}\bigr)
+ a_{1}^{2}k^{2}H_{t}^{n}U_{t}^{n} + v_{2}^{n},
\]
so that, the desired expansion of $\Phi^{n,1}$ in powers of $k$ is given by
\begin{equation}
\Phi^{n,1} = \Phi^{n} + a_{1}k\Phi_{t}^{n} + a_{1}^{2}k^{2}H_{t}^{n}U_{t}^{n} + v_{2}^{n},
\label{eq319}
\end{equation}
where
\[
v_{2}^{n} = -a_{1}k\zeta^{n} + v_{1}^{n}.
\]
Note that by \eqref{eq313} and \eqref{eq22}, (2.3) it follows that
\begin{equation}
\|v_{2}^{n}\|_{1} \leq C_{\lambda} h^{r-1},
\label{eq320}
\end{equation}
where $C_{\lambda}$ is a first-order polynomial in $\lambda$ with positive coefficients. 
(In the sequel we shall denote by $C_{\lambda}$ polynomials of $\lambda$ with positive coefficients without reference to their degree.) In deriving \eqref{eq320} we made use 
of the fact (something that we will also do in the sequel, without explicit mention,) 
that the quantities $\|\partial_{t}^{i}H^{n}\|_{j}$,
$\|\partial_{t}^{i}U^{n}\|_{j}$ for $j=0,1,2$ and for each $i$, are bounded, uniformly in $n$, by constants independent of the discretization parameters $k$ and $h$. This follows 
from (2.3a), \eqref{eq211} and the smoothness of $\eta$ and $u$. The same holds for the quantities
$\|\partial_{t}^{i}H^{n}\|_{j,\infty}$, $\|\partial_{t}^{i}U^{n}\|_{j,\infty}$ for $j=0,1$, 
as seen from \eqref{eq23b} and \eqref{eq21b}.  \\ \noindent
Now, from \eqref{eq317}
\begin{equation}
W^{n,1}W_{x}^{n,1} = U^{n}U_{x}^{n} + a_{1}k(UU_{x})_{t}^{n} +a_{1}^{2}k^{2}U_{t}^{n}U_{tx}^{n}
+ w_{1}^{n},
\label{eq321}
\end{equation}
where
\[
w_{1}^{n} = -a_{1}k(U^{n}\zeta^{n})_{x} - a_{1}^{2}k^{2}(U_{t}^{n}\zeta^{n})_{x}
+ a_{1}^{2}k^{2}\zeta^{n}\zeta_{x}^{n}.
\]
Hence, from \eqref{eq316}, \eqref{eq321}, it follows that
\[
V_{x}^{n,1} + W^{n,1}W_{x}^{n,1} = H_{x}^{n} + U^{n}U_{x}^{n}
+a_{1}k (H_{x} + UU_{x})_{t}^{n} + a_{1}^{2}k^{2}U_{t}^{n}U_{tx}^{n} + w_{2}^{n},
\]
i.e.
\begin{equation}
F^{n,1} = F^{n} + a_{1}k F_{t}^{n} + a_{1}^{2}k^{2} U_{t}^{n}U_{tx}^{n} + w_{2}^{n},
\label{eq322}
\end{equation}
which is the required expansion of $F^{n,1}$. In the above
\[
w_{2}^{n} = - a_{1}k\psi_{x}^{n} + w_{1}^{n},
\]
for which, using \eqref{eq313}, the inverse inequalities, and the remarks following \eqref{eq320},
we obtain the estimate
\begin{equation}
\|w_{2}^{n}\| \leq C_{\lambda} h^{r-1}.
\label{eq323}
\end{equation}
We now find the expansions of $\Phi^{n,2}$ and $F^{n,2}$ up to $O(k^{3})$ terms. From \eqref{eq37},
i.e.
\[
V^{n,2} = H^{n} - a_{2}kP\Phi_{x}^{n,1},
\]
it follows, in view of \eqref{eq319}, that
\[
V^{n,2} = H^{n} - a_{2}kP\Phi_{x}^{n} - a_{1}a_{2}k^{2}P\Phi_{tx}^{n}
 - a_{1}^{2}a_{2}k^{3}P(H_{t}^{n}U_{t}^{n})_{x} - a_{2}kPv_{2x}^{n},
\]
which, in view of \eqref{eq311} gives
\begin{equation}
V^{n,2} = H^{n} + a_{2}kH_{t}^{n} + a_{1}a_{2}k^{2} H_{tt}^{n}
- a_{1}^{2}a_{2}k^{3}P(H_{t}^{n}U_{t}^{n})_{x}  + \psi_{1}^{n},
\label{eq324}
\end{equation}
where
\[
\psi_{1}^{n} = - a_{2}k\psi^{n} - a_{1}a_{2}k^{2} \psi_{t}^{n} - a_{2}kPv_{2x}^{n}.
\]
From \eqref{eq320} and \eqref{eq313} it follows that
\begin{equation}
\|\psi_{1}^{n}\| \leq C_{\lambda}k h^{r-1},
\label{eq325}
\end{equation}
and, by the inverse properties, that
\begin{equation}
\|\psi_{1x}^{n}\|\leq C_{\lambda} h^{r-1}.
\label{eq326}
\end{equation}
Moreover, since from \eqref{eq38}
\[
W^{n,2} = U^{n} - a_{2}kP_{0}F^{n,1},
\]
we obtain, using \eqref{eq322},
\[
W^{n,2} = U^{n} - a_{2}P_{0}F^{n} - a_{1}a_{2}k^{2}P_{0}F_{t}^{n}
- a_{1}^{2}a_{2}k^{3}P_{0}(U_{t}^{n}U_{tx}^{n}) - a_{2}kP_{0}w_{2}^{n},
\]
and finally, from \eqref{eq312},
\begin{equation}
W^{n,2} = U^{n} + a_{2}kU_{t}^{n} + a_{1}a_{2}k^{2}U_{tt}^{n}
- a_{1}^{2}a_{2}k^{3}P_{0}(U_{t}^{n}U_{tx}^{n}) + \zeta_{1}^{n},
\label{eq327}
\end{equation}
where
\[
\zeta_{1}^{n} = - a_{2}k\zeta^{n} - a_{1}a_{2}k^{2}\zeta_{t}^{n} - a_{2}kP_{0}w_{2}^{n},
\]
that we estimate, using \eqref{eq313} and \eqref{eq323}, by
\begin{equation}
\|\zeta_{1}^{n} \|\leq C_{\lambda}kh^{r-1},
\label{eq328}
\end{equation}
and, using the inverse properties, by
\begin{equation}
\|\zeta_{1x}^{n}\| \leq C_{\lambda} h^{r-1}.
\label{eq329}
\end{equation}
Now, from \eqref{eq324}, \eqref{eq327}, taking into account that $a_{1}=a_{2}$ we have
\begin{equation}
\begin{aligned}
V^{n,2}W^{n,2} & = H^{n}U^{n} + a_{2}k(HU)_{t}^{n}
+ a_{1}a_{2}k^{2}(H^{n}U_{tt}^{n} +H_{t}^{n}U_{t}^{n} +  H_{tt}^{n}U^{n})
+ a_{1}a_{2}^{2}k^{3}(H_{t}^{n}U_{tt}^{n} + H_{tt}^{n}U_{t}^{n}) \\
& \,\,\,\,\,\,\,
- a_{1}^{2}a_{2}k^{3}\bigl(H^{n}P_{0}(U_{t}^{n}U_{tx}^{n}) +U^{n} P(H_{t}^{n}U_{t}^{n})_{x}\bigr)
+ v_{3}^{n},
\end{aligned}
\label{eq330}
\end{equation}
where
\begin{align*}
v_{3}^{n} & = H^{n}\zeta_{1}^{n} + a_{2}kH_{t}^{n}\zeta_{1}^{n}
+ a_{1}a_{2}k^{2}H_{tt}^{n}\zeta_{1}^{n}  - a_{1}^{2}a_{2}k^{3}P(H_{t}^{n}U_{t}^{n})_{x}\zeta_{1}^{n}
- a_{1}^{2}a_{2}^{2}k^{4}\bigl(H_{t}^{n}P_{0}(U_{t}^{n}U_{tx}^{n}) + U_{t}^{n}P(H_{t}^{n}U_{t}^{n})_{x}\bigr) \\
& \,\,\,\,\,\, + a_{1}^{2}a_{2}^{2}k^{4}H_{tt}^{n}U_{tt}^{n}
- a_{1}^{3}a_{2}^{2}k^{5}\bigl(H_{tt}^{n}P_{0}(U_{t}^{n}U_{tx}^{n}) + U_{tt}^{n}P(H_{t}^{n}U_{t}^{n})_{x}\bigr)
+ a_{1}^{4}a_{2}^{2}k^{6}P(H_{t}^{n}U_{t}^{n})_{x}P_{0}(H_{t}^{n}U_{tx}^{n}) \\
&\,\,\,\,\,\, + \psi_{1}^{n}W^{n,2}.
\end{align*}
Using \eqref{eq328}, \eqref{eq329}, \eqref{eq325}, \eqref{eq313}, and taking into account the
inverse inequalities and the remarks following \eqref{eq320} we may estimate $v_{3}^{n}$ as
follows:
\begin{equation}
\|v_{3}^{n}\| \leq C_{\lambda}kh^{r-1} + Ck^{4}\, \quad \text{and} \quad
\|v_{3}^{n}\|_{1} \leq C_{\lambda}h^{r-1} + Ck^{4}.
\label{eq331}
\end{equation}
Finally, writing \eqref{eq330} in the form
\begin{align*}
V^{n,2}W^{n,2} & = H^{n}U^{n} + a_{2}k(HU)_{t}^{n} + a_{1}a_{2}k^{2}(HU)_{tt}^{n}
- a_{1}a_{2}k^{2}H_{t}^{n}U_{t}^{n}  + a_{1}a_{2}^{2}k^{3}(H_{t}U_{t})_{t}^{n} \\
& \,\,\,\,\,\, - a_{1}^{2}a_{2}k^{3}\bigl(H^{n}P_{0}(U_{t}^{n}U_{tx}^{n}) + U^{n}P(H_{t}^{n}U_{t}^{n})_{x}\bigr) + v_{3}^{n},
\end{align*}
we obtain, using \eqref{eq39} and \eqref{eq327}, the desired expansion of $\Phi^{n,2}$ in powers
of $k$ :
\begin{equation}
\begin{aligned}
\Phi^{n,2} & = \Phi^{n} + a_{2}k\Phi_{t}^{n} + a_{1}a_{2}k^{2}\Phi_{tt}^{n} - a_{1}a_{2}k^{2}H_{t}^{n}U_{t}^{n}
+ a_{1}a_{2}^{2}k^{3}(H_{t}U_{t})_{t}^{n} \\
&\,\,\,\,\,\, - a_{1}^{2}a_{2}k^{3}\bigl[(1+H^{n})P_{0}(U_{t}^{n}U_{tx}^{n}) + U^{n}P(H_{t}^{n}U_{t}^{n})_{x}\bigr] +  v_{4}^{n},
\end{aligned}
\label{eq332}
\end{equation}
where $v_{4}^{n} = \zeta_{1}^{n} + v_{3}^{n}$. Hence, from \eqref{eq328}, \eqref{eq329}, and
\eqref{eq331} we have
\begin{equation}
\|v_{4}^{n}\| \leq C_{\lambda} k h^{r-1}, \quad \|v_{4x}^{n}\|\leq C_{\lambda}h^{r-1} + Ck^{4}.
\label{eq333}
\end{equation}
Now it follows from \eqref{eq327} that
\begin{align*}
W^{n,2}W_{x}^{n,2} & = \biggl[U^{n} + a_{2}kU_{t}^{n} + a_{1}a_{2}k^{2}U_{tt}^{n}
- a_{1}^{2}a_{2}k^{3}P_{0}(U_{t}^{n}U_{tx}^{n}) + \zeta_{1}^{n}\biggr]\cdot \\
& \,\,\,\,\,\,\,\biggl[U_{x}^{n} + a_{2}kU_{tx}^{n} + a_{1}a_{2}k^{2}U_{ttx}^{n}
- a_{1}^{2}a_{2}k^{3}\bigl(P_{0}(U_{t}^{n}U_{tx}^{n})\bigr)_{x} + \zeta_{1x}^{n}\biggr],
\end{align*}
and, consequently, since $a_{1}=a_{2}$, that
\begin{equation}
\begin{aligned}
W^{n,2}W_{x}^{n,2} & = U^{n}U_{x}^{n} + a_{2}k(UU_{x})_{t}^{n} + a_{1}a_{2}k^{2}(UU_{x})_{tt}^{n}
- a_{1}a_{2}k^{2}U_{t}^{n}U_{tx}^{n} + a_{1}a_{2}^{2}k^{3}(U_{t}^{n}U_{tt}^{n})_{x} \\
&\,\,\,\,\,\, - a_{1}^{2}a_{2}k^{3}\bigl[U^{n}P_{0}(U_{t}^{n}U_{tx}^{n})\bigr]_{x} + w_{3}^{n},
\end{aligned}
\label{eq334}
\end{equation}
in which, using the approximation and inverse properties of the finite element spaces, \eqref{eq329},
and observations like the ones following \eqref{eq320}, we may estimate $w_{3}^{n}$ by the
inequalities
\begin{equation}
\|w_{3}^{n}\| \leq C_{\lambda}h^{r-1} + Ck^{4}, \quad
k\|w_{3x}^{n}\|\leq C_{\lambda}(h^{r-1} + k^{4}).
\label{eq335}
\end{equation}
Now, the definition of $F^{n,2}$ in \eqref{eq310}, \eqref{eq324}, and \eqref{eq334} give
\begin{equation}
\begin{aligned}
F^{n,2} & = F^{n} + a_{2}kF_{t}^{n} + a_{1}a_{2}k^{2}F_{tt}^{n} -a_{1}a_{2}k^{2}U_{t}^{n}U_{tx}^{n}
+ a_{1}a_{2}^{2}k^{3}(U_{t}^{n}U_{tt}^{n})_{x}\\
&\,\,\,\,\,\, - a_{1}^{2}a_{2}k^{3}\bigl[P(H_{t}^{n}U_{t}^{n})_{x}
+ U^{n}P_{0}(U_{t}^{n}U_{tx}^{n})\bigr]_{x} + w_{4}^{n},
\end{aligned}
\label{eq336}
\end{equation}
where
\[
w_{4}^{n} = \psi_{1x}^{n} + w_{3}^{n}.
\]
From \eqref{eq326} and \eqref{eq335} it follows that
\begin{equation}
\|w_{4}^{n}\| \leq C_{\lambda}h^{r-1} + Ck^{4}.
\label{eq337}
\end{equation}
This completes the required expansion of $F^{n,2}$ in powers of $k$. \par 
We now compute the required expansions of $\Phi^{n,3}$ and $F^{n,3}$ up to $O(k^{3})$ terms. 
In the course of estimating some of the $O(k^{4})$ remainder terms we need to find $L^{2}$ bounds independent of $h$ of third-order spatial derivatives of $U_{t}$ and $H_{t}$ and for this purpose we need the hypothesis \eqref{eq212}. Since
\[
V^{n,3} = H^{n} - a_{3}kP\Phi_{x}^{n,2} = H^{n} - kP\Phi_{x}^{n,2},
\]
from \eqref{eq332} and \eqref{eq311} it follows that
\begin{align*}
V^{n,3} & = H^{n} + kH_{t}^{n} - k\psi^{n} + a_{2}k^{2}H_{tt}^{n} - a_{2}k^{2}\psi_{t}^{n}
+ a_{1}a_{2}k^{3}H_{ttt}^{n} - a_{1}a_{2}k^{3}\psi_{tt}^{n}
+ a_{1}a_{2}k^{3}P(H_{t}^{n}U_{t}^{n})_{x} \\
&\,\,\,\,\,\, - a_{1}a_{2}^{2}k^{4}P\bigl((H_{t}U_{t})_{tx}^{n}\bigr)
+ a_{1}^{2}a_{2}k^{4}P\bigl[(1+H^{n})P_{0}(U_{t}^{n}U_{tx}^{n}) + U^{n}P(H_{t}^{n}U_{t}^{n})_{x}\bigr]_{x} - kPv_{4x}^{n},
\end{align*}
which we write as
\begin{equation}
V^{n,3} = H^{n} + kH_{t}^{n} + a_{2}k^{2}H_{tt}^{n} + a_{1}a_{2}k^{3}H_{ttt}^{n}
+ a_{1}a_{2}k^{3}P(H_{t}^{n}U_{t}^{n})_{x} + \psi_{2}^{n},
\label{eq338}
\end{equation}
where
\begin{align*}
\psi_{2}^{n} & = - k\psi^{n} - a_{2}k^{2}\psi_{t}^{n} - a_{1}a_{2}k^{3}\psi_{tt}^{n}
- a_{1}a_{2}^{2}k^{4}P\bigl((H_{t}U_{t})_{tx}^{n}\bigr) \\
&\,\,\,\,\,\, + a_{1}^{2}a_{2}k^{4}P\bigl[(1+H^{n})P_{0}(U_{t}^{n}U_{tx}^{n}) + U^{n}P(H_{t}^{n}U_{t}^{n})_{x}\bigr]_{x} - kPv_{4x}^{n}.
\end{align*}
From \eqref{eq333}, the approximation and inverse properties of the finite element 
spaces, and the remarks following \eqref{eq320} we get
\begin{equation}
\|\psi_{2}^{n}\| \leq C_{\lambda}kh^{r-1} + Ck^{4}.
\label{eq339}
\end{equation}
By similar considerations and using also the hypothesis \eqref{eq212} we infer in addition that
\begin{equation}
\|\psi_{2x}^{n}\| \leq C_{\lambda}(h^{r-1} + k^{4}).
\label{eq340}
\end{equation}
Also, since
\[
W^{n,3} = U^{n} - a_{3}kP_{0}F^{n,2} = U^{n} - kP_{0}F^{n,2},
\]
from \eqref{eq336} and \eqref{eq312} we obtain
\begin{align*}
W^{n,3} & = U^{n} + kU_{t}^{n} - k\zeta^{n} + a_{2}k^{2}U_{tt}^{n} - a_{2}k^{2}\zeta_{t}^{n}
+ a_{1}a_{2}k^{3}U_{ttt}^{n} - a_{1}a_{2}k^{3}\zeta_{tt}^{n}
+ a_{1}a_{2}k^{3}P_{0}(U_{t}^{n}U_{tx}^{n}) \\
&\,\,\,\,\,\, + a_{1}^{2}a_{2}k^{4}P_{0}\bigl[P(H_{t}^{n}U_{t}^{n})_{x} - U_{t}^{n}U_{tt}^{n}
+ U^{n}P_{0}(U_{t}^{n}U_{tx}^{n})\bigr]_{x} - kP_{0}w_{4}^{n},
\end{align*}
i.e.
\begin{equation}
W^{n,3} = U^{n} + kU_{t}^{n} + a_{2}k^{2}U_{tt}^{n} + a_{1}a_{2}k^{3}U_{ttt}^{n}
+ a_{1}a_{2}k^{3}P_{0}(U_{t}^{n}U_{tx}^{n}) + \zeta_{2}^{n},
\label{eq341}
\end{equation}
where
\[
\zeta_{2}^{n} = -k\zeta^{n} - a_{1}k^{2}\zeta_{t}^{n} - a_{1}a_{2}k^{3}\zeta_{tt}^{n}
 + a_{1}^{2}a_{2}k^{4}P_{0}\bigl[P(H_{t}^{n}U_{t}^{n})_{x} - U_{t}^{n}U_{tt}^{n}
+ U^{n}P_{0}(U_{t}^{n}U_{tx}^{n})\bigr]_{x} - kP_{0}w_{4}^{n},
\]
From \eqref{eq313}, \eqref{eq337}, the approximation and inverse properties of the 
finite element spaces, and the remarks following \eqref{eq320} we may see that
\begin{equation}
\|\zeta_{2}^{n}\| \leq C_{\lambda}kh^{r-1} + Ck^{4}.
\label{eq342}
\end{equation}
By similar considerations and also the hypothesis \eqref{eq212} it follows that
\begin{equation}
\|\zeta_{2x}^{n}\| \leq C_{\lambda}(h^{r-1} + k^{4}).
\label{eq343}
\end{equation}
From \eqref{eq338} and \eqref{eq341}, since $a_{2}=1/2=a_{1}$, we see that
\begin{equation}
\begin{aligned}
V^{n,3}W^{n,3} & = H^{n}U^{n} + k(HU)_{t}^{n} + a_{2}k^{2}(HU)_{tt}^{n} + a_{1}a_{2}k^{3}(HU)_{ttt}^{n}
- a_{1}a_{2}k^{3}(H_{t}U_{t})_{t}^{n}\\
&\,\,\,\,\,\,
+ a_{1}a_{2}k^{3}\bigl[H^{n}P_{0}(U_{t}^{n}U_{tx}^{n}) + U^{n}P(H_{t}^{n}U_{t}^{n})_{x}\bigr]
+ v_{5}^{n}.
\end{aligned}
\label{eq344}
\end{equation}
From \eqref{eq313}, \eqref{eq342}, \eqref{eq339}, \eqref{eq340}, the approximation 
and inverse properties of the finite element spaces, and the remarks following 
\eqref{eq320} it follows for the remainder term in \eqref{eq344} that
\begin{equation}
\|v_{5}^{n}\| \leq C_{\lambda}kh^{r-1} + Ck^{4} + C_{\lambda}k^{8}.
\label{eq345}
\end{equation}
Similar considerations and in addition \eqref{eq343}, \eqref{eq340}, lead to
\begin{equation}
\|v_{5x}^{n}\| \leq C_{\lambda}(h^{r-1} + k^{4}).
\label{eq346}
\end{equation}
Hence, from \eqref{eq341}. \eqref{eq39}, \eqref{eq344} we have the desired expansion of $\Phi^{n,3}$
given by
\begin{equation}
\begin{aligned}
\Phi^{n,3} & = \Phi^{n} + k \Phi_{t}^{n} + a_{2}k^{2}\Phi_{tt}^{n} + a_{1}a_{2}k^{3}\Phi_{ttt}^{n}
- a_{1}a_{2}k^{3}(H_{t}U_{t})_{t}^{n} \\
&\,\,\,\,\,\,
+ a_{1}a_{2}k^{3}\bigl[(1+H^{n})P_{0}(U_{t}^{n}U_{tx}^{n}) + U^{n}P(H_{t}^{n}U_{t}^{n})_{x}]
+ v_{6}^{n},
\end{aligned}
\label{eq347}
\end{equation}
in which $v_{6}^{n} = \psi_{2}^{n} + v_{5}^{n}$. Hence, from  \eqref{eq339}, \eqref{eq340},
\eqref{eq345}, \eqref{eq346}, it follows that
\begin{equation}
\|v_{6}^{n}\| \leq C_{\lambda}kh^{r-1} + Ck^{4}, \quad
\|v_{6x}^{n}\| \leq C_{\lambda}(h^{r-1} + k^{4}).
\label{eq348}
\end{equation}
From \eqref{eq341} we see that
\begin{align*}
W^{n,3}W_{x}^{n,3} & = \biggl[U^{n} + kU_{t}^{n} + a_{2}k^{2}U_{tt}^{n} + a_{1}a_{2}k^{3}U_{ttt}^{n}
+ a_{1}a_{2}k^{3}P_{0}(U_{t}^{n}U_{tx}^{n}) + \zeta_{2}^{n}\biggr]\cdot \\
& \,\,\,\,\,\,\,\biggl[U_{x}^{n} + kU_{tx}^{n} + a_{2}k^{2}U_{ttx}^{n} + a_{1}a_{2}k^{3}U_{tttx}^{n}
+ a_{1}a_{2}k^{3}\bigl(P_{0}(U_{t}^{n}U_{tx}^{n})\bigr)_{x} + \zeta_{2x}^{n}\biggr],
\end{align*}
and therefore, using the fact that $a_{1}=1/2$,
\begin{equation}
\begin{aligned}
W^{n,3}W_{x}^{n,3} & = U^{n}U_{x}^{n} + k(UU_{x})_{t}^{n} + a_{2}k^{2}(UU_{x})_{tt}^{n}
+ a_{1}a_{2}k^{3}(UU_{x})_{ttt}^{n} \\
&\,\,\,\,\,\, - a_{1}a_{2}k^{3}(U_{t}U_{tx})_{t}^{n} + a_{1}a_{2}k^{3}\bigl[U^{n}P_{0}(U_{t}^{n}U_{tx}^{n})\bigr]_{x} + w_{5}^{n}.
\end{aligned}
\label{eq349}
\end{equation}
For the remainder term, using \eqref{eq342}, \eqref{eq343}, the approximation and inverse properties of the finite element spaces, and considerations such as the ones following \eqref{eq320} we get
\begin{equation}
\|w_{5}^{n}\| \leq C_{\lambda}(h^{r-1} + k^{4}).
\label{eq350}
\end{equation}
Finally, from \eqref{eq338}, \eqref{eq310}, and \eqref{eq349}, we have the required 
expansion of $F^{n,3}$
\begin{equation}
\begin{aligned}
F^{n,3} & = F^{n} + kF_{t}^{n} + a_{2}k^{2}F_{tt}^{n} + a_{1}a_{2}k^{3}F_{ttt}^{n}
- a_{1}a_{2}k^{3}(U_{t}U_{tt})_{x}^{n} \\
& \,\,\,\,\,\,
+ a_{1}a_{2}k^{3}\bigl[P(H_{t}^{n}U_{t}^{n})_{x} 
+ U^{n}P_{0}(U_{t}^{n}U_{tx}^{n})\bigr]_{x} + w_{6}^{n},
\end{aligned}
\label{eq351}
\end{equation}
where  $w_{6}^{n} = \psi_{2,x}^{n} + w_{5}^{n}$. Therefore, by \eqref{eq340} 
and \eqref{eq351} we conclude that
\begin{equation}
\|w_{6}^{n}\| \leq C_{\lambda}(h^{r-1} + k^{4}).
\label{eq352}
\end{equation}
\indent 
We now come to the final line of the RK4 algorithm and the expansions of
$\sum_{j=1}^{4}b_{j}\Phi^{n,j-1}$, $\sum_{j=1}^{4}b_{j}F^{n,j-1}$ that are needed in the expressions \eqref{eq314}, \eqref{eq315} of the local errors. Since $\sum_{j=1}^{4}b_{j}=1$, from \eqref{eq319}, \eqref{eq332}, \eqref{eq347}, it follows that
\begin{align*}
b_{1}\Phi^{n} & + b_{2}\Phi^{n,1} + b_{3}\Phi^{n,2} + b_{4}\Phi^{n,3}  =
\Phi^{n} + (b_{2}a_{1}+b_{3}a_{2} + b_{4})k\Phi_{t}^{n} + (b_{2}a_{1}^{2}
- b_{3}a_{1}a_{2})k^{2}H_{t}^{n}U_{t}^{n}\\
& + (b_{3}a_{1}a_{2} + b_{4}a_{2})k^{2}\Phi_{tt}^{n} +(b_{3}a_{1}a_{2}^{2}
- b_{4}a_{1}a_{2})k^{3}(H_{t}U_{t})_{t}^{n}\\
& + (- b_{3}a_{1}^{2}a_{2} + b_{4}a_{1}a_{2})k^{3}
\bigl[(1+H^{n})P_{0}(U_{t}^{n}U_{tx}^{n}) + U^{n}P(H_{t}^{n}U_{t}^{n})_{x}\bigr]\\
& + b_{4}a_{1}a_{2}k^{3}\Phi_{ttt}^{n} + b_{2}v_{2}^{n} + b_{3}v_{4}^{n} + b_{4}v_{6}^{n},
\end{align*}
and, therefore, using the values of $b_{1}$, $b_{2}$, $b_{3}$, $b_{4}$, $a_{1}$, $a_{2}$,
\[
b_{1}\Phi^{n} + b_{2}\Phi^{n,1} + b_{3}\Phi^{n,2} + b_{4}\Phi^{n,3} =
\Phi^{n} + \tfrac{k}{2}\Phi_{t}^{n} + \tfrac{k^{2}}{6}\Phi_{tt}^{n} + \tfrac{k^{3}}{24}\Phi_{ttt}^{n}
+ \tfrac{1}{3}v_{2}^{n} + \tfrac{1}{3}v_{4}^{n} + \tfrac{1}{6}v_{6}^{n}.
\]
From \eqref{eq31}, \eqref{eq311}, \eqref{eq314}, and the above equality, we see that
\[
\delta_{1}^{n} = H^{n+1} - H^{n} -kH_{t}^{n} - \tfrac{k^{2}}{2}H_{tt}^{n} - \tfrac{k^{3}}{6}H_{ttt}^{n}
- \tfrac{k^{4}}{24}H_{tttt}^{n} + \alpha^{n},
\]
where
\[
\alpha^{n} = k\psi^{n} + \tfrac{k^{2}}{2}\psi_{t}^{n} + \tfrac{k^{3}}{6}\psi_{tt}^{n}
+ \tfrac{k^{4}}{24}\psi_{ttt}^{n} + \tfrac{k}{3}(Pv_{2x}^{n} + Pv_{4x}^{n}) + \tfrac{k}{6}Pv_{6x}^{n}.
\]
Therefore, since by \eqref{eq313}, \eqref{eq320}, \eqref{eq333}, \eqref{eq348},
\[
\|\alpha^{n}\| \leq C_{\lambda}k(h^{r-1} + k^{4}),
\]
it follows by Taylor's theorem that
\begin{equation}
\|\delta_{1}^{n}\| \leq C_{\lambda}k(h^{r-1} + k^{4}).
\label{eq353}
\end{equation}
In addition from \eqref{eq322}, \eqref{eq336}, \eqref{eq351} we obtain
\begin{align*}
b_{1}F^{n} & + b_{2}F^{n,1} + b_{3}F^{n,2} + b_{4}F^{n,3} =
F^{n} + (b_{2}a_{1} + b_{3}a_{2} + b_{4})kF_{t}^{n} + (b_{2}a_{1}^{2}
- b_{3}a_{1}a_{2})k^{2}U_{t}^{n}U_{tx}^{n} \\
& + (b_{3}a_{1}a_{2} + b_{4}a_{2})k^{2}F_{tt}^{n}
+ (b_{3}a_{1}a_{2}^{2} - b_{4}a_{1}a_{2})k^{3}(U_{t}^{n}U_{tt}^{n})_{x}\\
& + (-b_{3}a_{1}^{2}a_{2} + b_{4}a_{1}a_{2})k^{3}
\bigl[P(H_{t}^{n}U_{t}^{n}) + U^{n}P_{0}(U_{t}^{n}U_{tx}^{n})\bigr]_{x}\\
& + b_{4}a_{1}a_{2}k^{3}F_{ttt}^{n} + b_{2}w_{2}^{n} + b_{3}w_{4}^{n} + b_{4}w_{6}^{n},
\end{align*}
and therefore
\[
b_{1}F^{n} + b_{2}F^{n,1} + b_{3}F^{n,2} + b_{4}F^{n,3} =F^{n} + \tfrac{k}{2}F_{t}^{n}
+ \tfrac{k^{2}}{6}F_{tt}^{n} + \tfrac{k^{3}}{24}F_{ttt}^{n} + \tfrac{1}{3}w_{2}^{n}
+ \tfrac{1}{3}w_{4}^{n} + \tfrac{1}{6}w_{6}^{n}.
\]
From \eqref{eq32}, \eqref{eq312}, \eqref{eq315}, and the above, it follows that
\[
\delta_{2}^{n} = U^{n+1} - U^{n} - kU_{t}^{n} - \tfrac{k^{2}}{2}U_{tt}^{n}
- \tfrac{k^{3}}{6}U_{ttt}^{n} - \tfrac{k^{4}}{24}U_{tttt}^{n} + \beta^{n},
\]
where
\[
\beta^{n} = k\zeta^{n} + \tfrac{k^{2}}{2}\zeta_{t}^{n} + \tfrac{k^{3}}{6}\zeta_{tt}^{n}
+ \tfrac{k^{4}}{24}\zeta_{ttt}^{n} + \tfrac{k}{3}(P_{0}w_{2}^{n}+ P_{0}w_{4}^{n})
+ \tfrac{k}{6}P_{0}w_{6}^{n},
\]
and, in view of \eqref{eq323}, \eqref{eq337}, and \eqref{eq352},
\[
\|\beta^{n}\| \leq C_{\lambda}k(h^{r-1} + k^{4}).
\]
By the above and Taylor's theorem we see that $\|\delta_{2}^{n}\| \leq C_{\lambda}k(h^{r-1} + k^{4})$.
This estimate and \eqref{eq353} conclude the proof of the proposition.
\end{proof}
\section{error estimate}
In this section we analyze the convergence of the fully discrete scheme \eqref{eq33}-\eqref{eq36} to the solution of (SW) in the $L^{2}\times L^{2}$ norm. 
We start with three technical Lemmata
whose notation and results will be used in the course of proof of the error 
estimate in Theorem 4.4.
\begin{lemma} For $j=1,2,3$, let $V^{n,j}$, $W^{n,j}$ be defined by 
\eqref{eq37}-\eqref{eq310} and let $\lambda = k/h$. Then, there exist constants 
$C_{\lambda}$ depending polynomially on $\lambda$, such that
\begin{align}
& \|H^{n} - V^{n,j}\|_{\infty} + \|U^{n} - W^{n,j}\|_{\infty} \leq C_{\lambda}k,
\label{eq41}\\
& \|H^{n} - V^{n,j}\|_{1,\infty} + \|U^{n} - W^{n,j}\|_{1,\infty} \leq C_{\lambda}.
\label{eq42}
\end{align}
\label{L41}
\end{lemma}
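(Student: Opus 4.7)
The plan is to exploit the explicit Taylor-type expansions of $V^{n,j}$ and $W^{n,j}$ in powers of $k$ that are already available from the proof of Proposition \ref{P31}, and to bound each term in $L^{\infty}$ and $W^{1,\infty}$ using the smoothness of $(\eta,u)$, the stability and approximation properties of $P, P_{0}$, and the inverse inequalities \eqref{eq22}. The standing observation, used throughout, is that for any fixed $i$ the quantities $\|\partial_{t}^{i}H^{n}\|_{1,\infty}$, $\|\partial_{t}^{i}U^{n}\|_{1,\infty}$ and products thereof (including those projected back onto $S_{h}$ or $S_{h,0}$) are bounded uniformly in $n$ and $h$; this follows from the regularity of $(\eta,u)$ together with \eqref{eq23b}, \eqref{eq211}, possibly preceded by a single inverse-inequality step costing at most a factor of $h^{-1/2}$ that can be absorbed into $C_{\lambda}$.

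For $j=1$ the expansions \eqref{eq316}, \eqref{eq317} yield
\[
V^{n,1}-H^{n}=a_{1}k(H_{t}^{n}-\psi^{n}), \qquad W^{n,1}-U^{n}=a_{1}k(U_{t}^{n}-\zeta^{n}).
\]
The leading contributions $kH_{t}^{n}$ and $kU_{t}^{n}$ are of order $k$ in both norms. Since $\psi^{n}\in S_{h}$ and $\zeta^{n}\in S_{h,0}$ satisfy $\|\psi^{n}\|+\|\zeta^{n}\|\le Ch^{r-1}$ by Lemma \ref{L31}, \eqref{eq22} gives $\|\psi^{n}\|_{1,\infty}+\|\zeta^{n}\|_{1,\infty}\le Ch^{r-5/2}$, which is bounded because $r\ge 3$. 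This establishes \eqref{eq41} and \eqref{eq42} for $j=1$.

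For $j=2$ I would read $V^{n,2}-H^{n}$ and $W^{n,2}-U^{n}$ off \eqref{eq324} and \eqref{eq327}. Each of the polynomial-in-$k$ terms $k^{i}\partial_{t}^{i}H^{n}$, $k^{i}\partial_{t}^{i}U^{n}$, together with the $k^{3}$-coefficient quantities $P(H_{t}^{n}U_{t}^{n})_{x}$ and $P_{0}(U_{t}^{n}U_{tx}^{n})$, is uniformly bounded in $W^{1,\infty}$ by the standing observation. The remainders $\psi_{1}^{n},\zeta_{1}^{n}$ satisfy the $L^{2}$ bounds \eqref{eq325}, \eqref{eq328} and the $H^{1}$ bounds \eqref{eq326}, \eqref{eq329}; feeding these into \eqref{eq22} and using $k=\lambda h$ produces $L^{\infty}$ contributions of order $C_{\lambda}k$ and $W^{1,\infty}$ contributions of order $C_{\lambda}$. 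Summing gives \eqref{eq41} and \eqref{eq42} for $j=2$.

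For $j=3$ the argument is analogous, based on the expansions \eqref{eq338}, \eqref{eq341} and the remainder estimates \eqref{eq339}, \eqref{eq340}, \eqref{eq342}, \eqref{eq343}. The only point requiring a little extra care is the $O(k^{4})$ pieces in $\psi_{2}^{n}, \zeta_{2}^{n}$: after an inverse-inequality step they contribute terms such as $k^{4}h^{-1/2}=\lambda^{4}h^{7/2}$ and $k^{3}h^{-1/2}=\lambda^{3}h^{5/2}$, both safely bounded by $C_{\lambda}$ for $h$ small. The main obstacle of the proof is thus purely one of bookkeeping: one must verify in each remainder that every inverse-inequality factor $h^{-\ell-1/2}$ is accompanied by enough powers of $k$ (via $k=\lambda h$) and of $h$ (via the approximation orders of $P,P_{0}$) to leave a nonnegative exponent of $h$ and a polynomial dependence on $\lambda$. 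No new analytical idea beyond those already employed in the proof of Proposition \ref{P31} is required.
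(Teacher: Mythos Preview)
Your argument is correct, but it takes a markedly different and heavier route than the paper. The paper proceeds directly from the defining relations \eqref{eq37}--\eqref{eq38}: writing $H^{n}-V^{n,j}=ka_{j}P\Phi_{x}^{n,j-1}$ and $U^{n}-W^{n,j}=ka_{j}P_{0}F^{n,j-1}$, and expanding $\Phi^{n,j-1}$, $F^{n,j-1}$ via \eqref{eq39}--\eqref{eq310}, the $L^{\infty}$-stability \eqref{eq23b} of the projections immediately gives inequalities of the form
\[
\|H^{n}-V^{n,j}\|_{\infty}\le Ck\bigl(\|W_{x}^{n,j-1}\|_{\infty}+\|V_{x}^{n,j-1}\|_{\infty}\|W^{n,j-1}\|_{\infty}+\|V^{n,j-1}\|_{\infty}\|W_{x}^{n,j-1}\|_{\infty}\bigr),
\]
and a short recursion in $j$, started from the known $W^{1,\infty}$ bounds on $V^{n,0}=H^{n}$, $W^{n,0}=U^{n}$ and supplemented by a single inverse step for the $W^{1,\infty}$ estimate, yields \eqref{eq41}--\eqref{eq42} in a few lines. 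You instead import the full Taylor-type expansions \eqref{eq316}, \eqref{eq317}, \eqref{eq324}, \eqref{eq327}, \eqref{eq338}, \eqref{eq341} and their remainder bounds from the proof of Proposition~\ref{P31}. This works, but it entails considerably more bookkeeping, and as written---through your citation of \eqref{eq340} and \eqref{eq343}---it leans on the extra hypothesis \eqref{eq212}, which the paper's direct recursion does not require. The paper's argument isolates exactly the structure the lemma needs; yours recycles machinery built for a much finer purpose.
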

\begin{proof} From \eqref{eq37}, \eqref{eq38}, and \eqref{eq39}, \eqref{eq310},
we have, for $j=1,2,3$,
\begin{align*}
& H^{n} - V^{n,j} = ka_{j}P\Phi_{x}^{n,j-1}
= ka_{j}P(W^{n,j-1} + V^{n,j-1}W^{n,j-1})_{x}, \\
& U^{n} - W^{n,j} = ka_{j}P_{0}F^{n,j-1} = ka_{j}P_{0}(V_{x}^{n,j-1} + W^{n,j-1}W_{x}^{n,j-1}),
\end{align*}
and so, by (2.3b)
\begin{align*}
& \|H^{n} - V^{n,j}\|_{\infty}
\leq Ck(\|W_{x}^{n,j-1}\|_{\infty} + \|V_{x}^{n,j-1}\|_{\infty}\|W^{n,j-1}\|_{\infty}
+ \|V^{n,j-1}\|_{\infty} \|W_{x}^{n,j-1}\|_{\infty}), \\
& \|U^{n} - W^{n,j}\|_{\infty}
\leq Ck(\|V_{x}^{n,j-1}\|_{\infty} + \|W^{n,j-1}\|_{\infty} \|W_{x}^{n,j-1}\|_{\infty}),
\end{align*}
for $j=1,2,3$. From these relations, using e.g. \eqref{eq211} and the inverse properties of $S_{h}$, $S_{h,0}$, we may derive recursively \eqref{eq41} and \eqref{eq42}.
\end{proof}
\begin{lemma} Let $\ve^{n} \in S_{h}$, $e^{n} \in S_{h,0}$ and suppose that $\rho^{n,j}$, $r^{n,j}$
are functions defined for $j=1,2,3$ by
\begin{align}
& \rho^{n,j} = (1 + H^{n})P_{0}r_{x}^{n,j-1} + U^{n}P\rho_{x}^{n,j-1},
\label{eq43}\\
& r^{n,j} = P\rho_{x}^{n,j-1} + U^{n} P_{0}r_{x}^{n,j-1},
\label{eq44}
\end{align}
with
\begin{align}
& \rho^{n,0}=\rho^{n} = (1 + H^{n})e^{n} + U^{n}\ve^{n},
\label{eq45}\\
& r^{n,0} = r^{n} = \ve^{n} + U^{n}e^{n}.
\label{eq46}
\end{align}
Then, there exists a constant $C$ such that
\begin{align}
& \|\rho^{n}\| + \|r^{n}\| \leq C (\|\ve^{n}\| + \|e^{n}\|),
\label{eq47} \\
& \|\rho_{x}^{n,j}\| + \|r_{x}^{n,j}\| \leq \tfrac{C}{h^{j+1}}(\|\ve^{n}\| + \|e^{n}\|),
\quad j=1,2,3.
\label{eq48}
\end{align}
If, moreover, $\|\ve^{n}\|_{1,\infty} + \|e^{n}\|_{1,\infty} \leq \wt{C}$ for some constant $\wt{C}$,  then, for $j=0,1,2,3$,
\begin{equation}
\|\rho_{x}^{n,j}\|_{\infty} + \|r_{x}^{n,j}\|_{\infty} \leq \tfrac{C}{h^{j}}.
\label{eq49}
\end{equation}
\label{L42}
\end{lemma}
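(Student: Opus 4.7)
The plan is to establish the three estimates by direct calculation and induction on $j$, leaning on the $L^{2}$- and $L^{\infty}$-stability of $P$ and $P_{0}$, the inverse inequalities \eqref{eq22}, and the uniform boundedness of $H^{n}$, $U^{n}$ and their first spatial derivatives. These last bounds follow from \eqref{eq23b}, \eqref{eq211}, Sobolev embedding, and the smoothness of $\eta$ and $u$; in particular $\|H^{n}\|_{1,\infty} + \|U^{n}\|_{1,\infty} \leq C$ uniformly in $h$ and $n$.

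The estimate \eqref{eq47} is immediate from \eqref{eq45}, \eqref{eq46} and the uniform $L^{\infty}$ bounds on $1 + H^{n}$ and $U^{n}$. For \eqref{eq48} I would induct on $j$, starting at $j=0$. Differentiating \eqref{eq45}, \eqref{eq46} and invoking the $L^{2}$-inverse inequality from \eqref{eq22} on $e^{n} \in S_{h,0}$ and $\ve^{n} \in S_{h}$ yields the base step $\|\rho_{x}^{n}\| + \|r_{x}^{n}\| \leq C h^{-1}(\|\ve^{n}\| + \|e^{n}\|)$. For the inductive step, differentiate \eqref{eq43}, \eqref{eq44} and estimate the two types of terms that appear: coefficient-times-projection terms are handled by the $L^{2}$-stability $\|P\rho_{x}^{n,j-1}\| \leq \|\rho_{x}^{n,j-1}\|$ (and its $P_{0}$-analogue), while terms of the form $(P\rho_{x}^{n,j-1})_{x}$ and $(P_{0}r_{x}^{n,j-1})_{x}$ are handled by the inverse inequality \eqref{eq22} to give $Ch^{-1}\|\rho_{x}^{n,j-1}\|$ and $Ch^{-1}\|r_{x}^{n,j-1}\|$. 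Combining these with the uniform $L^{\infty}$-bounds on the coefficients produces the one-step recursion $\|\rho_{x}^{n,j}\| + \|r_{x}^{n,j}\| \leq Ch^{-1}(\|\rho_{x}^{n,j-1}\| + \|r_{x}^{n,j-1}\|)$, which closes the induction with exactly one power of $h^{-1}$ lost per level.

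The estimate \eqref{eq49} follows the same inductive template, but in the $L^{\infty}$-norm. The base case $j=0$ reduces, via the explicit formulas \eqref{eq45}, \eqref{eq46}, to the hypothesis $\|\ve^{n}\|_{1,\infty} + \|e^{n}\|_{1,\infty} \leq \wt{C}$ together with the uniform $W^{1,\infty}$-bounds on $H^{n}$, $U^{n}$. For $j\geq 1$ the two key tools are the $L^{\infty}$-stability \eqref{eq23b} of $P$ and $P_{0}$, which gives $\|P_{0}r_{x}^{n,j-1}\|_{\infty} \leq C\|r_{x}^{n,j-1}\|_{\infty}$, and the $L^{\infty}$-inverse inequality $\|\chi'\|_{\infty} \leq Ch^{-1}\|\chi\|_{\infty}$ for $\chi \in S_{h}$ (or $S_{h,0}$), a standard Markov-type consequence of piecewise polynomiality and quasiuniformity. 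Together these yield $\|(P_{0}r_{x}^{n,j-1})_{x}\|_{\infty} \leq Ch^{-1}\|r_{x}^{n,j-1}\|_{\infty}$ and the analogous bound for $P\rho_{x}^{n,j-1}$, so that the recursion $\|\rho_{x}^{n,j}\|_{\infty} + \|r_{x}^{n,j}\|_{\infty} \leq Ch^{-1}(\|\rho_{x}^{n,j-1}\|_{\infty} + \|r_{x}^{n,j-1}\|_{\infty})$ closes the induction.

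The bookkeeping is otherwise routine; the one point I expect to be the main (minor) obstacle is making sure each induction step loses only a single power of $h^{-1}$, which requires invoking the $L^{\infty}$-to-$L^{\infty}$ Markov inverse inequality in the argument for \eqref{eq49} rather than the lossier $L^{2}$-to-$L^{\infty}$ bound \eqref{eq22}; otherwise the sharp $h^{-j}$ rate cannot be reached.
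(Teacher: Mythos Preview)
Your proposal is correct and follows essentially the same recursive argument as the paper's proof: establish the base case $j=0$ directly from \eqref{eq45}--\eqref{eq46} and the inverse inequalities, then pass from level $j-1$ to level $j$ via \eqref{eq43}--\eqref{eq44}, losing exactly one power of $h^{-1}$ at each step. Your observation that the $L^{\infty}$ recursion for \eqref{eq49} requires the Markov-type bound $\|\chi'\|_{\infty} \leq Ch^{-1}\|\chi\|_{\infty}$ on $S_{h}$ (rather than the $L^{2}$-to-$L^{\infty}$ inequality in \eqref{eq22}) is correct and is used implicitly in the paper's proof as well.
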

\begin{proof} The inequality \eqref{eq47} follows from \eqref{eq45} and \eqref{eq46} and (2.3b).
To prove \eqref{eq48} note that
\[
\|\rho_{x}^{n}\| + \|r_{x}^{n}\| \leq \|H_{x}^{n}e^{n}\| + \|(1+H^{n})e_{x}^{n}\|
+ \|\ve_{x}^{n}\| + \|U_{x}^{n}e^{n}\| + \|U^{n}e_{x}^{n}\|.
\]
Similarly,
\[
\|\rho_{x}^{n,j}\| + \|r_{x}^{n,j}\| \leq \tfrac{C}{h}(\|r_{x}^{n,j-1}\| + \|\rho_{x}^{n,j-1}\|),
\]
for $j=1,2,3$, and \eqref{eq48} follows by recursion. Since
\[
\|\rho_{x}^{n}\|_{\infty} + \|r_{x}^{n}\|_{\infty} \leq \|H_{x}^{n}e^{n}\|_{\infty}
+ \|(1+H^{n})e_{x}^{n}\|_{\infty} + \|\ve_{x}^{n}\|_{\infty} + \|U_{x}^{n}e^{n}\|_{\infty}
+ \|U^{n}e_{x}^{n}\|_{\infty},
\]
the hypothesis of the Lemma and \eqref{eq211}, imply
\[
\|\rho_{x}^{n}\|_{\infty} + \|r_{x}^{n}\|_{\infty} \leq C.
\]
Moreover, since
\[
\|\rho_{x}^{n,j}\|_{\infty} + \|r_{x}^{n,j}\|_{\infty}
\leq \tfrac{C}{h}(\|r_{x}^{n,j-1}\|_{\infty} + \|\rho_{x}^{n,j-1}\|_{\infty})
\]
holds for $j=1,2,3$, a recursive argument yields \eqref{eq49}.
\end{proof}
\begin{lemma}
Given $\ve^{n} \in S_{h}$, $e^{n}\in S_{h,0}$, let $\rho^{n,j}$, $r^{n,j}$, $0\leq j\leq 3$,
be defined as in Lemma \ref{L42}. In addition, let $\rho^{n,-1}(x) = \int_{0}^{x}\ve^{n}$,
$r^{n,-1}(x) = \int_{0}^{x}e^{n}$, for $0\leq x\leq 1$. Then, for $0\leq i < j \leq 3$,
we have 
\begin{equation}
(P\rho_{x}^{n,i}, \rho_{x}^{n,j}) + \bigl((1 + H^{n})P_{0}r_{x}^{n,i},r_{x}^{n,j}\bigr) =
-(\rho_{x}^{n,i+1},P\rho_{x}^{n,j-1}) - \bigl((1 + H^{n})r_{x}^{n,i+1}, P_{0}r_{x}^{n,j-1}\bigr)
+ \gamma_{i}^{n,j-1},
\label{eq410}
\end{equation}
where
\begin{equation}
\gamma_{i}^{n,j-1} = (U_{x}^{n}P\rho_{x}^{n,i},P\rho_{x}^{n,j-1})
+ \bigl([(1 + H^{n})U_{x}^{n} - H_{x}^{n}U^{n}]P_{0}r_{x}^{n,i},P_{0}r_{x}^{n,j-1}\bigr).
\label{eq411}
\end{equation}
In the particular cases $j=i+1$, $i=-1,0,1,2$, \eqref{eq410} may be simplified to:
\begin{equation}
(P\rho_{x}^{n,i},\rho_{x}^{n,i+1}) + \bigl((1 + H^{n})P_{0}r_{x}^{n,i},r_{x}^{n,i+1}\bigr)
= \tfrac{1}{2}\gamma_{i}^{n,i}.
\label{eq412}
\end{equation}
In all cases $-1\leq i<j\leq 3$, we have the estimates
\begin{equation}
\abs{\gamma_{i}^{n,j-1}} \leq \tfrac{C}{h^{i+j+1}}(\|\ve^{n}\|^{2} + \|e^{n}\|^{2}).
\label{eq413}
\end{equation}
\label{L43}
\end{lemma}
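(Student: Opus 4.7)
The plan is to verify \eqref{eq410} by a direct substitution using the recursions \eqref{eq43}--\eqref{eq44}, then deduce \eqref{eq412} as a symmetry corollary, and finally establish \eqref{eq413} via Cauchy--Schwarz together with \eqref{eq48} and the $L^\infty$ bounds on the coefficients.

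Concretely, I would first rewrite $\rho_x^{n,j}$ and $r_x^{n,j}$ by differentiating \eqref{eq43}--\eqref{eq44}, obtaining expressions whose highest-derivative terms are $(1+H^n)(P_0r_x^{n,j-1})_x + U^n(P\rho_x^{n,j-1})_x$ and $(P\rho_x^{n,j-1})_x + U^n(P_0r_x^{n,j-1})_x$, respectively. Substituting into the LHS of \eqref{eq410} and integrating by parts against the four inner products whose factors contain an $x$-derivative, the boundary contributions vanish: the four pairings all carry a factor of either $P_0r_x^{n,j-1}$ or $U^n$, each of which lies in $S_{h,0}$ and therefore vanishes at $x=0,1$. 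This produces the analogous expressions with $\rho_x^{n,i+1}$ and $r_x^{n,i+1}$ appearing via the recursion \eqref{eq43}--\eqref{eq44} applied with index $i+1$, together with stray terms proportional to $U_x^n$ and $H_x^n$. After matching symmetric cross terms (those of the form $(U_x^n A, B)$ coming from the integration by parts cancel with their transposes), the residual is precisely the two-term expression \eqref{eq411}, giving \eqref{eq410}.

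For \eqref{eq412}, I exploit the symmetry of the $L^2$ inner product: when $j=i+1$, the terms $(\rho_x^{n,i+1},P\rho_x^{n,i})$ and $((1+H^n)r_x^{n,i+1},P_0r_x^{n,i})$ coincide with the LHS of \eqref{eq410}, so \eqref{eq410} collapses to $2\,\mathrm{LHS}=\gamma_i^{n,i}$. The only subtlety is treating the case $i=-1$, for which $P\rho_x^{n,-1}=\ve^n\in S_h$ and $P_0 r_x^{n,-1}=e^n\in S_{h,0}$, so the recursion \eqref{eq43}--\eqref{eq44} remains consistent with \eqref{eq45}--\eqref{eq46} at the starting index.

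For the estimate \eqref{eq413}, I apply Cauchy--Schwarz to each term in \eqref{eq411} and pull out the coefficients in $L^\infty$: by the remarks following \eqref{eq320}, $\|U_x^n\|_\infty$, $\|H_x^n\|_\infty$, $\|U^n\|_\infty$, and $\|1+H^n\|_\infty$ are all bounded independently of $h$. The $L^2$ norms of $P\rho_x^{n,i}$ and $P_0r_x^{n,i}$ are controlled, via the $L^2$-stability of the projections, by $\|\rho_x^{n,i}\|$ and $\|r_x^{n,i}\|$, to which \eqref{eq48} applies (with the boundary case $i=-1$ handled by $\|\ve^n\|$ and $\|e^n\|$ directly, consistent with the convention $h^{-1+1}=1$). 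Multiplying the bounds for indices $i$ and $j-1$ gives the stated factor $h^{-(i+j+1)}$. The main obstacle throughout is the sheer volume of bookkeeping in the cancellation argument of the first paragraph; the conceptual content is limited to checking that every integration-by-parts boundary term is killed by $S_{h,0}$ membership, and that the only surviving asymmetric pieces are exactly those packaged into $\gamma_i^{n,j-1}$.
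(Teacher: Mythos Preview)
Your proposal is correct and follows essentially the same route as the paper: substitution of the recursions \eqref{eq43}--\eqref{eq44}, integration by parts with boundary terms killed by $S_{h,0}$ membership, and Cauchy--Schwarz with \eqref{eq48} for the estimate \eqref{eq413}. The only cosmetic differences are that the paper integrates by parts \emph{before} substituting (obtaining an intermediate identity $A^{i,j}$ in terms of the undifferentiated $\rho^{n,j}$, $r^{n,j}$) rather than after, and derives \eqref{eq412} via the identity $(\alpha v,(\beta v)_x)=\tfrac{1}{2}((\alpha\beta_x-\alpha_x\beta)v,v)$ rather than by your (equivalent and slightly slicker) symmetry observation that the right-hand side of \eqref{eq410} collapses to $-\text{LHS}+\gamma_i^{n,i}$ when $j=i+1$.
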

\begin{proof}
In all cases $-1\leq i<j\leq 3$, since $\rho^{n,j}$, $P_{0}r_{x}^{n,i}$ vanish at $x=0,1$,
integrating by parts yields
\begin{equation}
(P\rho_{x}^{n,i}, \rho_{x}^{n,j}) + \bigl((1 + H^{n})P_{0}r_{x}^{n,i},r_{x}^{n,j}\bigr) =
-\bigl((P\rho_{x}^{n,i})_{x},\rho^{n,i}\bigr)
- \bigl([(1+H^{n})P_{0}r_{x}^{n,i}]_{x}, r^{n,j}\bigr) \equiv A^{i,j}.
\label{eq414}
\end{equation}
We first examine the cases with $0\leq i<j\leq 3$. We have, using the definitions of the
$\rho^{n,\alpha}$, $r^{n,\alpha}$ and some computation, that
\begin{align*}
A^{i,j} & = -\bigl((P\rho_{x}^{n,i})_{x},(1+H^{n})P_{0}r_{x}^{n,j-1} + U^{n}P\rho_{x}^{n,j-1}\bigr)
- \bigl([(1 + H^{n})P_{0}r_{x}^{n,i}]_{x},P\rho_{x}^{n,j-1} + U^{n}P_{0}r_{x}^{n,j-1}\bigr)\\
& = -\bigr([(1+H^{n})P_{0}r_{x}^{n,i} + U^{n}P\rho_{x}^{n,i}]_{x}-U_{x}^{n}P\rho_{x}^{n,i},P\rho_{x}^{n,j-1}\bigr)  \\
&\,\,\,\,\,\,\, - \bigl((1+H^{n})(P\rho_{x}^{n,i}+U^{n}P_{0}r_{x}^{n,i})_{x} - (1+H^{n})U_{x}^{n}P_{0}r_{x}^{n,i} + H_{x}^{n}U^{n}P_{0}r_{x}^{n,i},P_{0}r_{x}^{n,j-1}\bigr)\\
& = - (\rho_{x}^{n,i+1},P\rho_{x}^{n,j-1}) - \bigl((1+H^{n})r_{x}^{n,i+1},P_{0}r_{x}^{n,j-1}\bigr)
+ \gamma_{i}^{n,j-1},
\end{align*}
where the $\gamma_{i}^{n,j-1}$ are defined by \eqref{eq411}. The last equality above and
\eqref{eq414} give \eqref{eq410}. The remaining case $i=-1$, $j=0$ is a special case of
\eqref{eq412}; the latter follows from \eqref{eq414}, \eqref{eq411}, similar computations as
above, and the identity $(\alpha v,(\beta v)_{x}) = ((\alpha\beta_{x} - \alpha_{x}\beta)v,v)/2$ valid
for $\alpha$, $\beta \in H^{1}$, $v \in \acr{H}^{1}$. \par
The estimate \eqref{eq413} for $j=1,2,3$, $0\leq i<j$, follows from \eqref{eq411}, and \eqref{eq48}, (2.3b). If $i=-1$ the proof is similar and takes account of the facts that $\rho_{x}^{n,-1}=\ve^{n}$, $r_{x}^{n,-1} = e^{n}$.
\end{proof}
The main error estimate of the paper, which incorporates the crucial stability step applied to an
error energy inequality, follows. We remark that the proof does not use the hypothesis \eqref{eq212}
except in its last step, where the local error estimates of Proposition \ref{P31} 
(recall that the latter
rely on the validity of \eqref{eq212}) are brought to bear.
\begin{theorem} Let $(\eta,u)$ be the solution of (SW) in $[0,1]\times[0,T]$ with
$1 + \eta \geq \alpha >0$, for some constant $\alpha$, and let  
$(H_{h}^{n},U_{h}^{n})$ be its fully discrete approximation defined by \eqref{eq33}-\eqref{eq36}. If $\lambda=k/h$ and $h$ is sufficiently
small, then there exists a constant $\lambda_{0}$ depending on $\alpha$, and a constant $C$
independent of $k$ and $h$, such that for $\lambda \leq \lambda_{0}$,
\begin{equation}
\max_{0\leq n\leq M}(\|\eta(t^{n}) - H_{h}^{n}\| + \|u(t^{n}) - U_{h}^{n}\|)
\leq C(h^{r-1} + k^{4}).
\label{eq415}
\end{equation}
\label{T41}
\end{theorem}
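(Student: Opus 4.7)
The plan is to reduce the error estimate to controlling the difference between the fully discrete approximation and the $L^2$-projections of the continuous solution, since by \eqref{eq23c} we have $\|\eta(t^n) - H^n\| + \|u(t^n) - U^n\| \leq Ch^r$. Set $\varepsilon^n := H^n - H_h^n \in S_h$ and $e^n := U^n - U_h^n \in S_{h,0}$, so \eqref{eq415} follows once we establish $\max_n(\|\varepsilon^n\| + \|e^n\|) \leq C(h^{r-1} + k^4)$. Subtracting the RK4 scheme \eqref{eq33}--\eqref{eq34} for $(H_h^n, U_h^n)$ from \eqref{eq314}--\eqref{eq315} for $(H^n, U^n)$, and expanding the differences $\Phi^{n,j-1} - \Phi_h^{n,j-1}$ and $F^{n,j-1} - F_h^{n,j-1}$ using \eqref{eq39}--\eqref{eq310}, yields an evolution equation for $(\varepsilon^n, e^n)$ whose right-hand side decomposes into a linear part in $(\varepsilon^n, e^n)$ whose iterates are exactly the quantities $\rho^{n,j}, r^{n,j}$ of Lemma \ref{L42}, plus nonlinear remainders quadratic in the errors, plus the local defects $\delta_1^n, \delta_2^n$ estimated in Proposition \ref{P31}.

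The next step is an energy estimate in the weighted norm $E^n := \|\varepsilon^n\|^2 + ((1+H^n)e^n, e^n)$, which by Lemma \ref{L22} is equivalent to $\|\varepsilon^n\|^2 + \|e^n\|^2$. Substituting the RK4 expansion into $E^{n+1}$ produces sums of cross terms of the form $(P\rho_x^{n,i}, \rho_x^{n,j}) + ((1+H^n)P_0 r_x^{n,i}, r_x^{n,j})$, which are precisely the objects handled by the identities \eqref{eq410}--\eqref{eq412} of Lemma \ref{L43}. Repeated application of these identities symmetrizes the cross terms and reduces them to the $\gamma_i^{n,j-1}$ pieces, together with telescoping contributions that mirror, at the discrete level, the classical algebraic identity $|p(i\omega)|^2 = 1 + O(\omega^6)$ for the RK4 polynomial $p(z)=1+z+z^2/2+z^3/6+z^4/24$; this is what renders the scheme conditionally $L^2$-stable for the linearized, nearly anti-selfadjoint operator driving the error. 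The surviving $\gamma_i^{n,j-1}$ contributions, bounded via \eqref{eq413} by $Ch^{-(i+j+1)}(\|\varepsilon^n\|^2 + \|e^n\|^2)$, enter the expansion multiplied by the prefactors $k^{i+j+2}$ coming from the RK4 stages and so yield terms of order $Ck\lambda^{i+j+1}E^n$, which for $\lambda \leq \lambda_0$ with a suitably small $\lambda_0$ are absorbed as $O(k)E^n$.

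The nonlinear remainders involve products of two or more copies of $(\varepsilon^n,e^n)$ and are controlled by a bootstrap: we inductively assume $\|\varepsilon^m\| + \|e^m\| \leq C(h^{r-1} + k^4)$ for all $m \leq n$, and use the inverse inequalities \eqref{eq22} to conclude that $\|\varepsilon^m\|_{1,\infty} + \|e^m\|_{1,\infty} \leq C$ provided $r\geq 3$, $h$ is small, and $k/h \leq \lambda_0$. Under this hypothesis Lemmata \ref{L41}--\ref{L42} allow us to bound all nonlinear stagewise contributions; combining with the consistency estimate of Proposition \ref{P31} leads to a discrete Gronwall inequality of the form $E^{n+1} \leq (1 + Ck)E^n + Ck(h^{r-1}+k^4)^2$, which together with $E^0 = 0$ gives $E^n \leq C(T)(h^{r-1}+k^4)^2$ and closes the induction.

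The principal obstacle is the algebraic bookkeeping required to cast the expansion of $E^{n+1}-E^n$ into a form in which Lemma \ref{L43} produces the cancellations of the dominant non-dissipative terms, and then to verify that every surviving term of negative order in $h$ is matched by a compensating positive power of $k$ so that, under the Courant restriction, it contributes at most $O(k)E^n$. A secondary difficulty is expanding the nonlinear stagewise differences $\Phi^{n,j}-\Phi_h^{n,j}$ and $F^{n,j}-F_h^{n,j}$ in a way that matches the exact recursion \eqref{eq43}--\eqref{eq46} defining the $\rho^{n,j}, r^{n,j}$; the superapproximation estimate \eqref{eq28} from Lemma \ref{L21} is what lets us absorb the discrepancies between $P_0[(1+H^n)\xi]$ and $(1+H^n)\xi$ that unavoidably arise in this matching.
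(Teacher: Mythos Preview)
Your proposal is correct and follows essentially the same strategy as the paper's own proof: energy estimate in the $(1+H^n)$-weighted norm, use of Lemma~\ref{L43} to reduce the cross terms to the $\gamma_i^{n,j-1}$ pieces plus residual $k^6$ and $k^8$ contributions mirroring $|p(i\omega)|^2 = 1 - \omega^6/72 + \omega^8/576$, bootstrap via the inductive hypothesis $\|\ve^n\|_{1,\infty}+\|e^n\|_{1,\infty}\leq 1$ to control the nonlinear remainders, and Gronwall. The paper makes explicit one point that your phrase ``$1+O(\omega^6)$'' leaves implicit: the $k^6$ residual carries a \emph{negative} sign (it is $-\tfrac{1}{72}\bigl[\|P\rho_x^{n,2}\|^2+((1+H^n)P_0r_x^{n,2},P_0r_x^{n,2})\bigr]$), and it is this dissipative term that, after one inverse inequality applied to $\beta_8^n$, absorbs the positive $k^8$ residual and produces the Courant threshold $\lambda_0=\sqrt{C_\alpha/(72C_0)}$.
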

\begin{proof} It suffices to show that
\begin{equation}
\max_{0\leq n\leq  M}(\|H^{n} - H_{h}^{n}\| + \|U^{n} - U_{h}^{n}\|) \leq C(h^{r-1} + k^{4}).
\label{eq416}
\end{equation}
To make the exposition easier to follow, we break up the proof in five parts. \\ \noindent
(i)\, {\em{Notation and the basic error equations}} \\ \noindent
Let
\[
\ve^{n,j}=V^{n,j} - H_{h}^{n,j}, \quad e^{n,j} = W^{n,j} - U_{h}^{n,j}, \quad j=0,1,2,3,
\]
with $\ve^{n,0} = \ve^{n}=H^{n}-H_{h}^{n}$, $e^{n,0} = e^{n} = U^{n}-U_{h}^{n}$. Then, from \eqref{eq33},
\eqref{eq37}, \eqref{eq38} it follows for $j=1,2,3$ that
\begin{equation}
\begin{aligned}
\ve^{n,j} & = \ve^{n} - a_{j}kP(\Phi^{n,j-1} - \Phi_{h}^{n,j-1})_{x}, \\
e^{n,j} & = e^{n} - a_{j}kP_{0}(F^{n,j-1} - F_{h}^{n,j-1}),
\end{aligned}
\label{eq417}
\end{equation}
and from \eqref{eq34}, \eqref{eq314}, \eqref{eq315} that
\begin{equation}
\begin{aligned}
\ve^{n+1} & = \ve^{n} - kP\Bigl[\sum_{j=1}^{4}b_{j}(\Phi^{n,j-1} - \Phi_{h}^{n,j-1})_{x}\Bigr]
+ \delta_{1}^{n}, \\
e^{n+1} & = e^{n} - kP_{0}\Bigl[\sum_{j=1}^{4}b_{j}(F^{n,j-1} - F_{h}^{n,j-1})\Bigr] + \delta_{2}^{n}.
\end{aligned}
\label{eq418}
\end{equation}
Also, from \eqref{eq39}, \eqref{eq310}, \eqref{eq35}, we have for $j=0,1,2,3$
\begin{align*}
\Phi^{n,j} - \Phi_{h}^{n,j} & = W^{n,j} + V^{n,j}W^{n,j} - U_{h}^{n,j} - H_{h}^{n,j}U_{h}^{n,j}\\
& = e^{n,j} + V^{n,j}W^{n,j} - (V^{n,j} - \ve^{n,j})(W^{n,j} - e^{n,j}),
\end{align*}
and
\begin{align*}
F^{n,j} - F_{h}^{n,j} & = V_{x}^{n,j} + W^{n,j}W_{x}^{n,j} - H_{hx}^{n,j} - U_{h}^{n,j}U_{hx}^{n,j}\\
& = \ve_{x}^{n,j} + W^{n,j}W_{x}^{n,j} - (W^{n,j} - e^{n,j})(W_{x}^{n,j} - e_{x}^{n,j}),
\end{align*}
so that
\begin{equation}
\begin{aligned}
\Phi^{n,j} - \Phi_{h}^{n,j} & = (1 + V^{n,j})e^{n,j} + W^{n,j}\ve^{n,j} - \ve^{n,j}e^{n,j},\\
F^{n,j} - F_{h}^{n,j} & = \ve_{x}^{n,j} + (W^{n,j}e^{n,j})_{x} -\tfrac{1}{2}((e^{n,j})^{2})_{x}.
\end{aligned}
\label{eq419}
\end{equation}
Since now
\begin{align*}
(1 + V^{n,j})e^{n,j} & = \bigl[1 + H^{n} - (H^{n} - V^{n,j})\bigr]e^{n,j} =
(1 + H^{n})e^{n,j} - (H^{n} - V^{n,j})e^{n,j},\\
W^{n,j}\ve^{n,j} & = \bigl[U^{n} - (U^{n} - W^{n,j})\bigr]\ve^{n,j} =
U^{n}\ve^{n,j} - (U^{n} - W^{n,j})\ve^{n,j},\\
W^{n,j}e^{n,j} & = \bigl[U^{n} - (U^{n} - W^{n,j})\bigr]e^{n,j} =
U^{n}e^{n,j} - (U^{n} - W^{n,j})e^{n,j},
\end{align*}
it follows from the equations \eqref{eq419} that for $j=0,1,2,3$
\begin{equation}
\begin{aligned}
\Phi^{n,j} - \Phi_{h}^{n,j} & = (1 + H^{n})e^{n,j}  + U^{n}\ve^{n,j} + v^{n,j}, \\
F^{n,j} - F_{h}^{n,j} & = \ve_{x}^{n,j} + (U^{n}e^{n,j})_{x}  + w_{x}^{n,j},
\end{aligned}
\label{eq420}
\end{equation}
where
\begin{equation}
\begin{aligned}
v^{n,j} & = - (H^{n} - V^{n,j})e^{n,j}
 -(U^{n} - W^{n,j})\ve^{n,j} - \ve^{n,j}e^{n,j},\\
w^{n,j} & = - (U^{n} - W^{n,j})e^{n,j} - \tfrac{1}{2}(e^{n,j})^{2}.
\end{aligned}
\label{eq421}
\end{equation}
We note for future reference that it follows from \eqref{eq421}, the inequalities 
\eqref{eq41}, and \eqref{eq42} and the inverse inequalities on the spaces 
$S_{h}$, $S_{h,0}$ that
\begin{align}
&\|v_{x}^{n,j}\| + \|w_{x}^{n,j}\| \leq (C_{\lambda} + \|\ve^{n,j}\|_{1,\infty}
+ \|e^{n,j}\|_{1,\infty})(\|\ve^{n,j}\| + \|e^{n,j}\|),
\label{eq422} \\
& \|v^{n,j}_{x}\|_{\infty} + \|w_{x}^{n,j}\|_{\infty} \leq
C_{\lambda}(\|\ve^{n,j}\|_{\infty} + \|e^{n,j}\|_{\infty})
+ 3\|\ve^{n,j}\|_{1,\infty}\|e^{n,j}\|_{1,\infty},
\label{eq423}
\end{align}
where, as usual, $C_{\lambda}$ denotes a constant depending polynomially on $\lambda$. \\ \noindent
(ii)\, {\em{Expansions of $\Phi^{n,j} - \Phi_{h}^{n,j}$, $F^{n,j} - F_{h}^{n,j}$ in powers of $k$}} \\
In this part of the proof we derive suitable representations of the differences
$\Phi^{n,j} - \Phi_{h}^{n,j}$, $F^{n,j} - F_{h}^{n,j}$ that will be used in the energy identities. \par
If $j=0$, we have
\begin{equation}
\begin{aligned}
\Phi^{n} - \Phi_{h}^{n} & = \rho^{n} + \rho_{1}^{n},\\
F^{n} - F_{h}^{n} & = r_{x}^{n} + r_{1x}^{n},
\end{aligned}
\label{eq424}
\end{equation}
where the $\rho^{n}$, $r^{n}$ were defined in Lemma \ref{L42} and satisfy the inequalities \eqref{eq47} and
\eqref{eq48}, and
\begin{equation}
\begin{aligned}
\rho_{1}^{n} & = v^{n,0} = - \ve^{n}e^{n}, \\
r_{1}^{n} & = w^{n,0} =  -\tfrac{1}{2}(e^{n})^{2},
\end{aligned}
\label{eq425}
\end{equation}
From \eqref{eq417} and \eqref{eq424} it follows that
\begin{equation}
\begin{aligned}
\ve^{n,1} & = \ve^{n} - ka_{1}P\rho_{x}^{n} - ka_{1}P\rho_{1x}^{n},\\
e^{n,1} & = e^{n} - ka_{1}P_{0}r_{x}^{n} - ka_{1}P_{0}r_{1x}^{n},
\end{aligned}
\label{eq426}
\end{equation}
and, using the notation of Lemma \ref{L42}, \eqref{eq43}, and \eqref{eq44}, that
\begin{align*}
& (1 + H^{n})e^{n,1} + U^{n}\ve^{n,1} = \rho^{n} - ka_{1}\rho^{n,1}
- ka_{1}[(1+H^{n})P_{0}r_{1x}^{n} + U^{n}P\rho_{1x}^{n}],\\
& \ve^{n,1} + U^{n}e^{n,1} = r^{n} - ka_{1}r^{n,1}
- ka_{1}(P\rho_{1x}^{n}+U^{n}P_{0}r_{1x}^{n}).
\end{align*}
Consequently, from the equations \eqref{eq420} we obtain
\begin{equation}
\begin{aligned}
\Phi^{n,1} - \Phi_{h}^{n,1} & = \rho^{n} - ka_{1}\rho^{n,1} + \rho_{1}^{n,1},\\
F^{n,1} - F_{h}^{n,1} & = r_{x}^{n} - ka_{1}r_{x}^{n,1} + r_{1x}^{n,1},
\end{aligned}
\label{eq427}
\end{equation}
where
\begin{equation}
\begin{aligned}
\rho_{1}^{n,1} & = - ka_{1}[(1+H^{n})P_{0}r_{1x}^{n} + U^{n}P\rho_{1x}^{n}]  + v^{n,1}, \\
r_{1}^{n,1} & = - ka_{1}(P\rho_{1x}^{n}+U^{n}P_{0}r_{1x}^{n}) + w^{n,1}.
\end{aligned}
\label{eq428}
\end{equation}
Hence, from \eqref{eq417} and the equations \eqref{eq427} we get
\begin{equation}
\begin{aligned}
\ve^{n,2} & =  \ve^{n} - ka_{2}P\rho_{x}^{n} + k^{2}a_{1}a_{2}P\rho_{x}^{n,1}
- ka_{2}P\rho_{1x}^{n,1},\\
e^{n,2} & = e^{n} - ka_{2}P_{0}r_{x}^{n} + k^{2}a_{1}a_{2}P_{0}r_{x}^{n,1}
- ka_{2}P_{0}r_{1x}^{n,1},
\end{aligned}
\label{eq429}
\end{equation}
and, using the notation introduced in \eqref{eq43} and \eqref{eq44},
\begin{align*}
& (1 + H^{n})e^{n,2} + U^{n}\ve^{n,2} = \rho^{n} -ka_{2}\rho^{n,1} + k^{2}a_{1}a_{2}\rho^{n,2}
- ka_{2}[(1 + H^{n})P_{0}r_{1x}^{n,1} + U^{n}P\rho_{1x}^{n,1}], \\
& \ve^{n,2} + U^{n}e^{n,2} = r^{n} - ka_{2}r^{n,1} + k^{2}a_{1}a_{2}r^{n,2}
- ka_{2}(P\rho_{1x}^{n,1} + U^{n}P_{0}r_{1x}^{n,1}).
\end{align*}
So, from the equations \eqref{eq420} we see that
\begin{equation}
\begin{aligned}
\Phi^{n,2} - \Phi_{h}^{n,2} & = \rho^{n} - ka_{2}\rho^{n,1} + k^{2}a_{1}a_{2}\rho^{n,2}
 + \rho_{1}^{n,2}, \\
F^{n,2} - F_{h}^{n,2} & = r_{x}^{n} - ka_{2}r_{x}^{n,1} + k^{2}a_{1}a_{2}r_{x}^{n,2}
 + r_{1x}^{n,2},
\end{aligned}
\label{eq430}
\end{equation}
where
\begin{equation}
\begin{aligned}
\rho_{1}^{n,2} & = - ka_{2}[(1 + H^{n})P_{0}r_{1x}^{n,1} + U^{n}P\rho_{1x}^{n,1}] + v^{n,2}, \\
r_{1}^{n,2} & = - ka_{2}(P\rho_{1x}^{n,1} + U^{n}P_{0}r_{1x}^{n,1}) + w^{n,2}.
\end{aligned}
\label{eq431}
\end{equation}
Hence, from the equations \eqref{eq417}, taking into account that $a_{3}=1$, we obtain
\begin{equation}
\begin{aligned}
\ve^{n,3} & = \ve^{n} - kP\rho_{x}^{n} + k^{2}a_{2}P\rho_{x}^{n,1}
 - k^{3}a_{1}a_{2}P\rho_{x}^{n,2} - kP\rho_{1x}^{n,2},\\
e^{n,3} & = e^{n} - kP_{0}r_{x}^{n} + k^{2}a_{2}P_{0}r_{x}^{n,1}
- k^{3}a_{1}a_{2}P_{0}r_{x}^{n,2} -  kP_{0}r_{1x}^{n,2},
\end{aligned}
\label{eq432}
\end{equation}
and, according to \eqref{eq43} and \eqref{eq44},
\begin{align*}
& (1 + H^{n})e^{n,3} + U^{n}\ve^{n,3} = \rho^{n} - k\rho^{n,1} + k^{2}a_{2}\rho^{n,2}
- k^{3}a_{1}a_{2}\rho^{n,3} - k[(1+H^{n})P_{0}r_{1x}^{n,2} + U^{n}P\rho_{1x}^{n,2}], \\
& \ve^{n,3} + U^{n}e^{n,3} = r^{n} - kr^{n,1} + k^{2}a_{2}r^{n,2}
- k^{3}a_{1}a_{2}r^{n,3} - k(P\rho_{1x}^{n,2} + U^{n}P_{0}r_{1x}^{n,2}).
\end{align*}
Consequently,
\begin{equation}
\begin{aligned}
\Phi^{n,3} - \Phi_{h}^{n,3} & = \rho^{n} - k\rho^{n,1} + k^{2}a_{2}\rho^{n,2}
- k^{3}a_{1}a_{2}\rho^{n,3} + \rho_{1}^{n,3},\\
F^{n,3} - F_{h}^{n,3} & = r_{x}^{n} - kr_{x}^{n,1} + k^{2}a_{2}r_{x}^{n,2}
- k^{3}a_{1}a_{2}r_{x}^{n,3} + r_{1x}^{n,3},
\end{aligned}
\label{eq433}
\end{equation}
where
\begin{equation}
\begin{aligned}
\rho_{1}^{n,3} & = - k[(1+H^{n})P_{0}r_{1x}^{n,2} + U^{n}P\rho_{1x}^{n,2}] + v^{n,3},\\
r_{1}^{n,3} & = - k(P\rho_{1x}^{n,2} + U^{n}P_{0}r_{1x}^{n,2}) + w^{n,3}.
\end{aligned}
\label{eq434}
\end{equation}
(iii)\, {\em{`Inductive' hypothesis and consequent estimates}} \\
We let now $n^{*}$ be the maximal integer for which
\[
\|\ve^{n}\|_{1,\infty} + \|e^{n}\|_{1,\infty} \leq 1, \quad 0\leq n\leq n^{*}.
\tag{H}
\label{eqh}
\]
Then, for $0\leq n\leq n^{*}$, from \eqref{eq425} it follows that
\begin{equation}
\begin{aligned}
& \|\rho_{1x}^{n}\| + \|r_{1x}^{n}\| \leq C(\|\ve^{n}\| + \|e^{n}\|),\\
& \|\rho_{1x}^{n}\|_{\infty} + \|r_{1x}^{n}\|_{\infty} \leq C.
\end{aligned}
\label{eq435}
\end{equation}
In addition, from \eqref{eq426}, \eqref{eqh}, \eqref{eq425}, 
\eqref{eq48} for $j=0,1$, \eqref{eq49} for $j=0$, the
second inequality of \eqref{eq435}, the inverse properties of $S_{h}$, $S_{h,0}$, 
and (2.3b), it follows that
\begin{align}
& \|\ve^{n,1}\| + \|e^{n,1}\| \leq C_{\lambda}(\|\ve^{n}\| + \|e^{n}\|),
\label{eq436} \\
& \|\ve^{n,1}\|_{1,\infty} + \|e^{n,1}\|_{1,\infty} \leq C_{\lambda}.
\label{eq437}
\end{align}
Also, from \eqref{eq422}, \eqref{eq423}, \eqref{eq436}, and \eqref{eq437}, we have
\begin{align}
& \|v_{x}^{n,1}\| + \|w_{x}^{n,1}\| \leq C_{\lambda}(\|\ve^{n}\| + \|e^{n}\|),
\label{eq438} \\
& \|v_{x}^{n,1}\|_{\infty} + \|w_{x}^{n,1}\|_{\infty} \leq C_{\lambda}.
\label{eq439}
\end{align}
Now, for $j=1,2,3$, in view of \eqref{eq428}, \eqref{eq431}, \eqref{eq434}, and the inverse properties of $S_{h}$ and $S_{h,0}$, it follows that
\begin{align*}
& \|\rho_{1x}^{n,j}\| + \|r_{1x}^{n,j}\|
\leq C_{\lambda}(\|\rho_{1x}^{n,j-1}\| + \|r_{1x}^{n,j-1}\|)
+ (\|v_{x}^{n,j}\| + \|w_{x}^{n,j}\|),\\
& \|\rho_{1x}^{n,j}\|_{\infty} + \|r_{1x}^{n,j}\|_{\infty}
\leq C_{\lambda}(\|\rho_{1x}^{n,j-1}\|_{\infty} + \|r_{1x}^{n,j-1}\|_{\infty})
+ (\|v_{x}^{n,j}\|_{\infty} + \|w_{x}^{n,j}\|_{\infty}).
\end{align*}
In addition, for $j=2,3$, \eqref{eq47}, \eqref{eq48}, \eqref{eq429}, \eqref{eq432}, \eqref{eq49}, and the inverse properties of $S_{h}$, $S_{h,0}$ give
\begin{align*}
& \|\ve^{n,j}\| + \|e^{n,j}\| \leq C_{\lambda}(\|\ve^{n}\| + \|e^{n}\|)
+ Ck(\|\rho_{1x}^{n,j-1}\| + \|r_{1x}^{n,j-1}\|), \\
& \|\ve^{n,j}\|_{1,\infty} + \|e^{n,j}\|_{1,\infty}
\leq C_{\lambda}(1 + \|\rho_{1x}^{n,j-1}\|_{\infty} + \|r_{1x}^{n,j-1}\|_{\infty}),
\quad 0\leq n\leq n^{*}.
\end{align*}
Therefore, for $0\leq n\leq n^{*}$ and $j=0,1,2,3$, in view of \eqref{eq435}, \eqref{eq438}, \eqref{eq439}, \eqref{eq422}, and arguing recursively, we finally obtain
\begin{align}
& \|\rho_{1x}^{n,j}\| + \|r_{1x}^{n,j}\| \leq C_{\lambda}(\|\ve^{n}\| + \|e^{n}\|),
\label{eq440} \\
& \|\rho_{1x}^{n,j}\|_{\infty} + \|r_{1x}^{n,j}\|_{\infty} \leq C_{\lambda}.
\label{eq441}
\end{align}
(iv)\, {\em{Basic energy identity and estimation of the terms in its right-hand side}} \\
From \eqref{eq418}, \eqref{eq424}, \eqref{eq427}, \eqref{eq430}, \eqref{eq433}, and the definitions of the constants $a_{j}$, $b_{j}$ of the RK scheme we have
\begin{equation}
\begin{aligned}
\ve^{n+1} & = f^{n} + f_{1}^{n} + \delta_{1}^{n}, \\
e^{n+1} & = g^{n} + g_{1}^{n} + \delta_{2}^{n},
\end{aligned}
\label{eq442}
\end{equation}
where
\begin{align*}
f^{n} & = \ve^{n} - kP\rho_{x}^{n} + \tfrac{k^{2}}{2}P\rho_{x}^{n,1}
- \tfrac{k^{3}}{6}P\rho_{x}^{n,2} + \tfrac{k^{4}}{24}P\rho_{x}^{n,3},\\
g^{n} & = e^{n} - kP_{0}r_{x}^{n} + \tfrac{k^{2}}{2}P_{0}r_{x}^{n,1}
-\tfrac{k^{3}}{6}P_{0}r_{x}^{n,2} + \tfrac{k^{4}}{24}P_{0}r_{x}^{n,3},\\
f_{1}^{n} & = - \tfrac{k}{6}(P\rho_{1x}^{n} + 2P\rho_{1x}^{n,1} + 2P\rho_{1x}^{n,2}
+ P\rho_{1x}^{n,3}), \\
g_{1}^{n} & = -\tfrac{k}{6}(P_{0}r_{1x}^{n} + 2P_{0}r_{1x}^{n,1} + 2P_{0}r_{1x}^{n,2}
+ P_{0}r_{1x}^{n,3}).
\end{align*}
From these relations and \eqref{eq47}, \eqref{eq48} it follows that
\begin{equation}
\|f^{n}\| + \|g^{n}\| \leq C_{\lambda} (\|\ve^{n}\| + \|e^{n}\|),
\label{eq443}
\end{equation}
and, moreover, for $0\leq n\leq n^{*}$, from \eqref{eq440}
\begin{equation}
\|f_{1}^{n}\| + \|g_{1}^{n}\|\leq C_{\lambda}k(\|\ve^{n}\| + \|e^{n}\|).
\label{eq444}
\end{equation}
Now, by the definitions of $f^{n}$, $g^{n}$, we may obtain the basic energy identity of our scheme:
\begin{equation}
\|f^{n}\|^{2} + \bigl((1+H^{n})g^{n},g^{n}\bigr) =
\|\ve^{n}\|^{2} + \bigl((1+H^{n})e^{n},e^{n}) + \sum_{i=1}^{8}k^{i}\beta_{i}^{n}.
\label{eq445}
\end{equation}
We will now identify and estimate the quantities $\beta_{i}^{n}$, $1\leq i\leq 8$, 
in the right-hand side of the above. For $\beta_{1}^{n}$ we have
\[
\beta_{1}^{n} = -2(\ve^{n},P\rho_{x}^{n}) - 2\bigl((1+H^{n})e^{n},P_{0}r_{x}^{n}\bigr).
\]
Since, by \eqref{eq412},
\[
(\ve^{n},\rho_{x}^{n}) + \bigl((1 + H^{n})e^{n},r_{x}^{n}\bigr) = \tfrac{1}{2}\gamma_{-1}^{n,-1},
\]
it follows that
\[
\beta_{1}^{n} = -2 \bigl((1+ H^{n})e^{n},P_{0}r_{x}^{n}-r_{x}^{n}\bigr) - \gamma_{-1}^{n,-1}.
\]
From this relation, Lemma \ref{L21}(ii), \eqref{eq48}, and \eqref{eq413}, we see that
\begin{equation}
\abs{\beta_{1}^{n}} \leq C(\|\ve^{n}\|^{2} + \|e^{n}\|^{2}).
\label{eq446}
\end{equation}
The quantity $\beta_{2}^{n}$ is given by
\[
\beta_{2}^{n} = (\ve^{n},P\rho_{x}^{n,1}) + \bigl((1+H^{n})e^{n},P_{0}r_{x}^{n,1}\bigr)
+ \|P\rho_{x}^{n}\|^{2} + \bigl((1+H^{n})P_{0}r_{x}^{n},P_{0}r_{x}^{n}\bigr).
\]
Since, by \eqref{eq410}
\[
(\ve^{n},\rho_{x}^{n,1}) + \bigl((1+H^{n})e^{n},r_{x}^{n,1})  =
- (\rho_{x}^{n},P\rho_{x}^{n}) - \bigl((1 + H^{n})r_{x}^{n},P_{0}r_{x}^{n}\bigr)
+ \gamma_{-1}^{n,0},
\]
we see that
\[
\beta_{2}^{n} = \bigl((1 + H^{n})e^{n},P_{0}r_{x}^{n,1} - r_{x}^{n,1}\bigr)
+ \bigl((1 + H^{n})P_{0}r_{x}^{n}, P_{0}r_{x}^{n} - r_{x}^{n}\bigr) + \gamma_{-1}^{n,0}.
\]
Hence, by Lemma \ref{L21}(ii), \eqref{eq48}, and \eqref{eq413} it follows that
\begin{equation}
\abs{\beta_{2}^{n}} \leq \tfrac{C}{h}(\|\ve^{n}\|^{2} + \|e^{n}\|^{2}).
\label{eq447}
\end{equation}
For $\beta_{3}^{n}$ we find
\[
\beta_{3}^{n} = -\tfrac{1}{3}\bigl((\ve^{n},P\rho_{x}^{n,2}) + ((1+H^{n})e^{n},P_{0}r_{x}^{n,2})\bigr)
- \bigl((P\rho_{x}^{n},P\rho_{x}^{n,1}) + ((1+H^{n})P_{0}r_{x}^{n},P_{0}r_{x}^{n,1})\bigr),
\]
i.e.
\begin{align*}
\beta_{3}^{n} & = -\tfrac{1}{3}\bigl[(\ve^{n}, P\rho_{x}^{n,2})
+ \bigl((1 + H^{n})e^{n},P_{0}r_{x}^{n,2}\bigr) + (P\rho_{x}^{n},P\rho_{x}^{n,1})
+ \bigl((1 + H^{n})P_{0}r_{x}^{n},P_{0}r_{x}^{n,1}\bigr)\bigr] \\
& \,\,\,\,\,\,\, - \tfrac{2}{3}\bigl[(P\rho_{x}^{n},P\rho_{x}^{n,1})
 + \bigl((1 + H^{n})P_{0}r_{x}^{n},P_{0}r_{x}^{n,1}\bigr)\bigr].
\end{align*}
Using \eqref{eq410}, \eqref{eq412} we see that
\begin{align*}
& (\ve^{n},P\rho_{x}^{n,2}) + \bigl((1 + H^{n})e^{n},r_{x}^{n,2}\bigr) =
- (\rho_{x}^{n},P\rho_{x}^{n,1}) - \bigl((1 + H^{n})r_{x}^{n},P_{0}r_{x}^{n,1}\bigr)
+ \gamma_{-1}^{n,1}, \\
& (P\rho_{x}^{n},\rho_{x}^{n,1}) + \bigl((1 + H^{n})P_{0}r_{x}^{n},r_{x}^{n,1}\bigr) =
\tfrac{1}{2}\gamma_{0}^{n,0},
\end{align*}
whence
\begin{align*}
\beta_{3}^{n} & = - \tfrac{1}{3}\bigl[\bigl((1 + H^{n})e^{n},P_{0}r_{x}^{n,2}-r_{x}^{n,2}\bigr)
+ \bigl((1 + H^{n})P_{0}r_{x}^{n,1}, P_{0}r_{x}^{n} - r_{x}^{n}\bigr) + \gamma_{-1}^{n,1}\bigr]\\
& \,\,\,\,\,\,\,
- \tfrac{2}{3}\bigl[\bigl((1 + H^{n})P_{0}r_{x}^{n},P_{0}r_{x}^{n,1}-r_{x}^{n,1}\bigr)
+ \tfrac{1}{2}\gamma_{0}^{n,0}\bigr],
\end{align*}
and, therefore, using again Lemma \ref{L21}(ii), \eqref{eq48}, and \eqref{eq413} 
we may estimate $\beta_{3}^{n}$ as
\begin{equation}
\abs{\beta_{3}^{n}} \leq \tfrac{C}{h^{2}}(\|\ve^{n}\|^{2} + \|e^{n}\|^{2}).
\label{eq448}
\end{equation}
For $\beta_{4}^{n}$ there holds
\begin{align*}
\beta_{4}^{n} & = \tfrac{1}{12}\bigl[(\ve^{n},P\rho_{x}^{n,3}) + \bigl((1+H^{n})e^{n},P_{0}r_{x}^{n,3}\bigr)\bigr]
+ \tfrac{1}{3}\bigl[(P\rho_{x}^{n},P\rho_{x}^{n,2})
+ \bigl((1+H^{n})P_{0}r_{x}^{n},P_{0}r_{x}^{n,2}\bigr)\bigr] \\
& \,\,\,\,\,\,\, + \tfrac{1}{4}\bigl[ \|P\rho_{x}^{n,1}\|^{2}
+ \bigl((1 + H^{n})P_{0}r_{x}^{n,1},P_{0}r_{x}^{n,1}\bigr)\bigr],
\end{align*}
or
\begin{align*}
\beta_{4}^{n} & = \tfrac{1}{12}\bigl[(\ve^{n},P\rho_{x}^{n,3})
+ \bigl((1 + H^{n})e^{n},P_{0}r_{x}^{n,3}\bigr) + (P\rho_{x}^{n},P\rho_{x}^{n,2})
+ \bigl((1+H^{n})P_{0}r_{x}^{n},P_{0}r_{x}^{n,2}\bigr)\bigr] \\
&\,\,\,\,\,\,\, + \tfrac{1}{4}\bigl[ (P\rho_{x}^{n},P\rho_{x}^{n,2})
+\bigl((1 + H^{n})P_{0}r_{x}^{n},P_{0}r_{x}^{n,2}\bigr)
+ \|P\rho_{x}^{n,1}\|^{2} + \bigl((1 + H^{n})P_{0}r_{x}^{n,1},P_{0}r_{x}^{n,1}\bigr)\bigr].
\end{align*}
Since, in view of \eqref{eq410},
\begin{align*}
& (\ve^{n},P\rho_{x}^{n,3}) + \bigl((1 + H^{n})e^{n},r_{x}^{n,3}\bigr)
= - (\rho_{x}^{n},P\rho_{x}^{n,2}) - \bigl((1+H^{n})r_{x}^{n},P_{0}r_{x}^{n,2}\bigr)
+ \gamma_{-1}^{n,2}, \\
& (P\rho_{x}^{n},\rho_{x}^{n,2}) + \bigl((1 + H^{n})P_{0}r_{x}^{n},r_{x}^{n,2}\bigr) =
- (\rho_{x}^{n,1},P\rho_{x}^{n,1}) - \bigl((1+H^{n})r_{x}^{n,1},P_{0}r_{x}^{n,1}\bigr)
+ \gamma_{0}^{n,1},
\end{align*}
it follows from Lemma \ref{L21}(ii), \eqref{eq48}, and \eqref{eq413} that
\begin{align*}
\beta_{4}^{n} & = \tfrac{1}{12}\bigl[\bigl((1+H^{n})e^{n},P_{0}r_{x}^{n,3}-r_{x}^{n,3}\bigr)
 + \bigl((1+H^{n})P_{0}r_{x}^{n,2},P_{0}r_{x}^{n} - r_{x}^{n}\bigr) + \gamma_{-1}^{n,2}\bigr]\\
&\,\,\,\,\,\,\,
+ \tfrac{1}{4}\bigl[\bigl((1+H^{n})P_{0}r_{x}^{n},P_{0}r_{x}^{n,2}-r_{x}^{n,2}\bigr)
+ \bigl((1+H^{n})P_{0}r_{x}^{n,1},P_{0}r_{x}^{n,1}- r_{x}^{n,1}\bigr)
+ \gamma_{0}^{n,1}\bigr],
\end{align*}
and, consequently, that
\begin{equation}
\abs{\beta_{4}^{n}} \leq \tfrac{C}{h^{3}}(\|\ve^{n}\|^{2} + \|e^{n}\|^{2}).
\label{eq449}
\end{equation}
The quantity $\beta_{5}^{n}$ is given by
\[
\beta_{5}^{n}  = -\tfrac{1}{12}\bigl[(P\rho_{x}^{n},P\rho_{x}^{n,3})
+ \bigl((1+H^{n})P_{0}r_{x}^{n},P_{0}r_{x}^{n,3}\bigr)\bigr]
 - \tfrac{1}{6}\bigl[(P\rho_{x}^{n,1},P\rho_{x}^{n,2})
+\bigl((1+H^{n})P_{0}r_{x}^{n,1},P_{0}r_{x}^{n,2}\bigr)\bigr],
\]
or by
\begin{align*}
\beta_{5}^{n} & = -\tfrac{1}{12}\bigl[(P\rho_{x}^{n},P\rho_{x}^{n,3})
+ \bigl((1+H^{n})P_{0}r_{x}^{n},P_{0}r_{x}^{n,3}\bigr) +
(P\rho_{x}^{n,1},P\rho_{x}^{n,2})
+\bigl((1+H^{n})P_{0}r_{x}^{n,1},P_{0}r_{x}^{n,2}\bigr)\bigr]\\
&\,\,\,\,\,\,\, - \tfrac{1}{12}\bigl[(P\rho_{x}^{n,1},P\rho_{x}^{n,2})
+\bigl((1+H^{n})P_{0}r_{x}^{n,1},P_{0}r_{x}^{n,2}\bigr)\bigr].
\end{align*}
However, in view of \eqref{eq410} and \eqref{eq412}, we have
\begin{align*}
& (P\rho_{x}^{n},\rho_{x}^{n,3}) +
\bigl((1+H^{n})P_{0}r_{x}^{n},r_{x}^{n,3}\bigr) = -(\rho_{x}^{n,1},P\rho_{x}^{n,2})
- \bigl((1+H^{n})r_{x}^{n,1},P_{0}r_{x}^{n,2}\bigr) + \gamma_{0}^{n,2},\\
&(P\rho_{x}^{n,1},\rho_{x}^{n,2}) + \bigl((1+H^{n})P_{0}r_{x}^{n,1},r_{x}^{n,2}\bigr) =
\tfrac{1}{2}\gamma_{1}^{n,1},
\end{align*}
whence, from Lemma \ref{L21}(ii), \eqref{eq48}, and \eqref{eq413}
\begin{align*}
\beta_{5}^{n} & = -\tfrac{1}{12}\bigl[\bigl((1+H^{n})P_{0}r_{x}^{n},P_{0}r_{x}^{n,3}-r_{x}^{n,3}\bigr)
+ \bigl((1+H^{n})P_{0}r_{x}^{n,2},P_{0}r_{x}^{n,1}-r_{x}^{n,1}\bigr)
+ \gamma_{0}^{n,2}\bigr] \\
& \,\,\,\,\,\,\, - \tfrac{1}{12}
\bigl[\bigl((1+H^{n})P_{0}r_{x}^{n,1}, P_{0}r_{x}^{n,2}-r_{x}^{n,2}\bigr)
+ \tfrac{1}{2}\gamma_{1}^{n,1}\bigr],
\end{align*}
and so
\begin{equation}
\abs{\beta_{5}^{n}} \leq \tfrac{C}{h^{4}}(\|\ve^{n}\|^{2} + \|e^{n}\|^{2}).
\label{eq450}
\end{equation}
For $\beta_{6}^{n}$ we have
\[
\beta_{6}^{n}  = \tfrac{1}{24}\bigl[(P\rho_{x}^{n,1},P\rho_{x}^{n,3})
+ \bigl((1+H^{n})P_{0}r_{x}^{n,1},P_{0}r_{x}^{n,3}\bigr)\bigr]
+ \tfrac{1}{36}\bigl[\|P\rho_{x}^{n,2}\|^{2}
+\bigl((1+H^{n})P_{0}r_{x}^{n,2},P_{0}r_{x}^{n,2}\bigr)\bigr],
\]
which gives
\begin{align*}
\beta_{6}^{n} & = \tfrac{1}{24}
\bigr[(P\rho_{x}^{n,1},P\rho_{x}^{n,3})
+ \bigl((1+H^{n})P_{0}r_{x}^{n,1},P_{0}r_{x}^{n,3}\bigr) + \|P\rho_{x}^{n,2}\|^{2}
+\bigl((1+H^{n})P_{0}r_{x}^{n,2},P_{0}r_{x}^{n,2}\bigr)\bigr]\\
&\,\,\,\,\,\,\, - \tfrac{1}{72} \bigl[\|P\rho_{x}^{n,2}\|^{2}
+\bigl((1+H^{n})P_{0}r_{x}^{n,2},P_{0}r_{x}^{n,2}\bigr)\bigr].
\end{align*}
Since now
\[
(P\rho_{x}^{n,1},\rho_{x}^{n,3})
+ \bigl((1 + H^{n})P_{0}r_{x}^{n,1},r_{x}^{n,3}\bigr) =
-(\rho_{x}^{n,2},P\rho_{x}^{n,2}
- \bigl((1+H^{n})r_{x}^{n,2},P_{0}r_{x}^{n,2}\bigr) + \gamma_{1}^{n,2},
\]
we write
\begin{equation}
\beta_{6}^{n} = \beta_{6}^{n,1} + \beta_{6}^{n,2},
\label{eq451}
\end{equation}
where
\begin{align*}
\beta_{6}^{n,1} & = \tfrac{1}{24}\bigl[
\bigl((1+H^{n})P_{0}r_{x}^{n,1},P_{0}r_{x}^{n,3} - r_{x}^{n,3}\bigr)
+\bigl((1+H^{n})P_{0}r_{x}^{n,2},P_{0}r_{x}^{n,2}-r_{x}^{n,2}\bigr) + \gamma_{1}^{n,2}\bigr],\\
\beta_{6}^{n,2} & = -\tfrac{1}{72}\bigl[\|P\rho_{x}^{n,2}\|^{2}
+ \bigl((1+H^{n})P_{0}r_{x}^{n,2},P_{0}r_{x}^{n,2}\bigr)\bigr].
\end{align*}
From \eqref{eq48}, \eqref{eq413}, and Lemma \ref{L21}(ii) we see that
\begin{equation}
\abs{\beta_{6}^{n,1}} \leq \tfrac{C}{h^{5}}(\|\ve^{n}\|^{2} + \|e^{n}\|^{2}).
\label{eq452}
\end{equation}
Now, from Lemma \ref{L22} for sufficiently small $h$ we infer that
\begin{equation}
\beta_{6}^{n,2}\leq 
-\tfrac{C_{\alpha}}{72}(\|P\rho_{x}^{n,2}\|^{2} + \|P_{0}r_{x}^{n,2}\|^{2}),
\label{eq453}
\end{equation}
where $C_{\alpha} = \min(1,\alpha/2)$. \\
The quantity $\beta_{7}^{n}$ is given by
\[
\beta_{7}^{n} = -\tfrac{1}{72}\bigl[(P\rho_{x}^{n,2},P\rho_{x}^{n,3})
+ \bigl((1+H^{n})P_{0}r_{x}^{n,2},P_{0}r_{x}^{n,3}\bigr)\bigr],
\]
and since, by \eqref{eq412},
\[
(P\rho_{x}^{n,2},\rho_{x}^{n,3})
+ \bigl((1+H^{n})P_{0}r_{x}^{n,2},r_{x}^{n,3}\bigr) = \tfrac{1}{2}\gamma_{2}^{n,2},
\]
we have
\[
\beta_{7}^{n} = -\tfrac{1}{72}\bigl[\bigl((1+H^{n})P_{0}r_{x}^{n,2},P_{0}r_{x}^{n,3}-r_{x}^{n,3}\bigr)
 + \tfrac{1}{2}\gamma_{2}^{n,2}\bigr],
\]
and, therefore, in view of Lemma \ref{L21}(ii) and \eqref{eq413},
\begin{equation}
\abs{\beta_{7}^{n}} \leq \tfrac{C}{h^{6}}(\|\ve^{n}\|^{2} + \|e^{n}\|^{2}).
\label{eq454}
\end{equation}
Finally, $\beta_{8}^{n}$ is given by \[
\beta_{8}^{n} = \tfrac{1}{24^{2}}
\bigl[\|P\rho_{x}^{n,3}\|^{2} +\bigl((1+H^{n})P_{0}r_{x}^{n,3},P_{0}r_{x}^{n,3}\bigl)\bigr].
\]
Hence
\[
\beta_{8}^{n} \leq \tfrac{1}{24^{2}}(\|\rho_{x}^{n,3}\|^{2} + C'\|r_{x}^{n,3}\|^{2}),
\]
and from \eqref{eq43}, \eqref{eq44}, and the inverse properties of 
$S_{h}$, $S_{h,0}$, we get that
\begin{equation}
\beta_{8}^{n} \leq \tfrac{C_{0}}{h^{2}}(\|P\rho_{x}^{n,2}\|^{2} + \|P_{0}r_{x}^{n,2}\|^{2}).
\label{eq455}
\end{equation}
where $C_{0}$ is a constant independent of $h$ and $k$.  We conclude therefore from
\eqref{eq445}-\eqref{eq455} that
\begin{align*}
\|f^{n}\|^{2} + \bigl((1+H^{n})g^{n},g^{n}\bigr)
& \leq \|\ve^{n}\|^{2} + \bigl((1 + H^{n})e^{n},e^{n}\bigr)
 + C_{\lambda}k(\|\ve^{n}\|^{2} + \|e^{n}\|^{2}) \\
& \,\,\,\,\,\,
+ k^{6}\bigl(\lambda^{2}C_{0} - \tfrac{C_{\alpha}}{72}\bigr)(\|P\rho_{x}^{n,2}\|^{2}
+ \|P_{0}r_{x}^{n,2}\|^{2}).
\end{align*}
(v)\, {\em{Stability, use of local error estimates, and completion of the proof}} \\
From the last inequality above, for $\lambda \leq \lambda_{0}=\sqrt{C_{\alpha}/(72C_{0})}$ it follows
that
\begin{equation}
\|f^{n}\|^{2} + \bigl((1+H^{n})g^{n},g^{n}\bigr) \leq
\|\ve^{n}\|^{2} + \bigl((1 + H^{n})e^{n},e^{n}\bigr) +
C_{\lambda}k(\|\ve^{n}\|^{2} + \|e^{n}\|^{2}).
\label{eq456}
\end{equation}
Therefore, using the equations \eqref{eq442}, we see that
\begin{equation}
\begin{aligned}
\|\ve^{n+1}\|^{2} + \bigl((1 + H^{n+1})e^{n+1},e^{n+1}\bigr) & = \|f^{n}\|^{2}
+ 2(f^{n},f_{1}^{n} + \delta_{1}^{n}) + \|f_{1}^{n} + \delta_{1}^{n}\|^{2}
+ \bigl((1+H^{n+1})g^{n},g^{n}\bigr)\\
& + 2\bigl((1+H^{n+1})g^{n},g_{1}^{n}+\delta_{2}^{n}\bigr)
+ \bigl((1+H^{n+1})(g_{1}^{n} + \delta_{2}^{n}),g_{1}^{n} + \delta_{2}^{n}\bigr).
\end{aligned}
\label{eq457}
\end{equation}
From \eqref{eq443}, \eqref{eq444} for $0\leq n\leq n^{*}$, we obtain
\[
\|f^{n}\|\|f_{1}^{n}\| + \|g^{n}\|\|g_{1}^{n}\| \leq C_{\lambda}k(\|\ve^{n}\|^{2} + \|e^{n}\|^{2}),
\]
and from Proposition \ref{P31}, and \eqref{eq443}, \eqref{eq444} that
\[
\|f^{n}\|\|\delta_{1}^{n}\| + \|g^{n}\|\|\delta_{2}^{n}\| \leq
C_{\lambda}k(\|\ve^{n}\|^{2} + \|e^{n}\|^{2} + (h^{r-1} + k^{4})^{2}).
\]
Moreover, taking into account that
\[
\bigl((1+H^{n+1})g^{n},g^{n}\bigr) \leq \bigl((1+H^{n})g^{n},g^{n}\bigr) + Ck\|g^{n}\|^{2},
\]
we get from \eqref{eq457} in view of \eqref{eq456} and \eqref{eq210},
\[
\|\ve^{n+1}\|^{2} + \bigl((1 + H^{n+1})e^{n+1},e^{n+1}\bigr) \leq
(1 + \tfrac{C_{\lambda}}{C_{\alpha}}k)\Bigl(\|\ve^{n}\|^{2}
+ \bigl((1 + H^{n})e^{n},e^{n}\bigr)\Bigr) + C_{\lambda}'k(h^{r-1}+k^{4})^{2},
\]
for $0\leq n\leq n^{*}$. Therefore from Gronwall's lemma it follows that
\[
\|\ve^{n}\|^{2} + \bigl((1+H^{n})e^{n},e^{n}\bigr)
\leq C_{1}\Bigl((\|\ve^{0}\|^{2} + \bigl((1+H^{0})e^{0},e^{0}\bigr)\Bigr)
+ C_{2}(h^{r-1} + k^{4})^{2},
\]
where $C_{1}$, $C_{2}$ do not depend on $n^{*}$. Therefore, by \eqref{eq210}
\[
\|\ve^{n}\|^{2} + \|e^{n}\|^{2} \leq C_{1}(\|\ve^{0}\|^{2} + \|e^{0}\|^{2})
+ C_{2}(h^{r-1} + k^{4})^{2},
\]
i.e.
\[
\|\ve^{n}\| + \|e^{n}\| \leq C(h^{r-1} + k^{4}),
\]
for $0\leq n\leq n^{*} + 1$, where the constant $C$ does not depend on $n^{*}$. From the inverse
inequalities of $S_{h}$, $S_{h,0}$ and the fact that $r\geq 3$ it follows that $n^{*}$ was not maximal.
Hence, we may take $n^{*}=M-1$ and obtain the result of the theorem.
\end{proof}
\section{computational remarks} In this section we present results of numerical 
experiments that we performed in order to determine computationally the spatial and 
temporal rates of convergence of fully discrete schemes of the type analyzed in the previous sections. 
We also report on some computational results on the validity 
of the property \eqref{eq212} in the case of cubic and quartic splines. \\ \noindent
(i)\,\, \emph{Spatial rates of convergence}
\\ \noindent
As previously mentioned, it is well known that in the case of first-order hyperbolic
problems, the standard 
Galerkin method on a general quasiuniform mesh converges in $L^{2}$ with a spatial rate of $r-1$. 
We illustrate this for the problem at hand in the case of $C^{2}$ cubic splines $(r=4)$ defined on the quasiuniform mesh 
$0=x_{1}<x_{2}<...<x_{N+1}=1$, where $x_{i+1}=x_{i} + h_{i}$, $1\leq i\leq N$, $N$ even,
and $h_{i}=0.8h$ if $i\equiv 0\bmod 2$, $h_{i}=1.2h$ if $i\equiv 1\bmod 2$, and $h=1/N$. 
We solve the system of shallow water equations \eqref{eqsw} with the addition 
of a suitable right-hand side and initial conditions, so that its exact solution
is $\eta(x,t) = \exp(2t)(x+\cos(\pi x)+2)$, $u(x,t) = \exp(-xt)\sin(\pi x)$.
We integrate the semidiscrete problem in time for $0\leq t\leq 1$ by the classical 
RK4 scheme taking small enough time steps so that the temporal error is negligible in
comparison with the spatial one. Table \ref{tbl1} shows the numerical rates of convergence
at $t=1$ in the $L^{2}$ and $L^{\infty}$ norms and the $H^{1}$ seminorm as $N$ increases,
when $k/h=1/20$. The $L^{2}$ and $L^{\infty}$ rates are practically equal to $3$, while
the $H^{1}$ seminorm rate is practically $2$. (The analogous experiment with $C^{4}$ quintic
splines $(r=6)$ yielded numerical rates of convergence in $L^{2}$, $L^{\infty}$, and
$H^{1}$ approximately equal to $5$, $5$, and $4$ respectively.)
 \par
In the case of uniform spatial mesh the numerical experiments suggest that the 
$L^{2}$ rate of convergence is $O(h^{r})$, i.e. optimal. This was proved in \cite{ad}
for the finite element space of continuous piecewise linear functions $(r=2)$ for 
\eqref{eqsw} and for general $r$ in the case of periodic boundary conditions. 
Table 4.2 in \cite{ad} suggests that the numerical $L^{2}$ rates of convergence for $C^{2}$ cubic splines are also optimal, i.e. equal to 4. 
\\ \noindent
Here we illustrate this property in the
case of $C^{4}$ quintic splines. Table \ref{tbl2} shows  the associated numerical rates with
$h=1/N$, $k=10^{-4}$ for the same test problem at $t=1$. The $L^{2}$, $L^{\infty}$, 
$H^{1}$ rates are observed to be close to 6, 6, and 5, for both components.
\\ \noindent
(ii)\,\, \emph{Temporal rates of convergence} \par
We turn now to the computational determination of the temporal accuracy, which is a
harder exercise. We follow the technique proposed in \cite{bdkmc}. We select a test 
problem with known exact solution and for a fixed spatial grid (i.e fixed $h$) we
compute the numerical solutions up to $t=T$ with decreasing values of $k=T/M$ 
satisfying the stability condition. The $L^{2}$ error $E=E(T)$ ceases to decrease
of course after a certain $k$ when the temporal error becomes much smaller than the 
spatial one. Denote by $V^{M_{ref}}(h,k_{ref})$ the numerical solution (here $V=\eta$
or $u$) computed with a time step $k_{ref} = T/M_{ref}$ which is taken well below the
threshold after which $E$ stabilizes. Therefore, the error of the approximation
$V^{M_{ref}}(h,k_{ref})$ is almost purely spatial. We then compute a modified 
$L^{2}$ error for values of $k$ much larger than $k_{ref}$, which is defined by
\[
E^{*} = E^{*}(T) = \|V^{M}(h,k) - V^{M_{ref}}(h, k_{ref})\|,
\]
where $T = Mk$. It is reasonable to expect that the subtraction 
$V^{M}(h,k) - V^{M_{ref}}(h,k_{ref})$ will essentially cancel the spatial error of
$V^{M}(h,k)$ for a range of values of $k$; thus the temporal order of accuracy of the 
scheme may emerge from a sequence of computations of $E^{*}$ with decreasing $k$ in 
that range. The success of this procedure depends of course on finding an appropriate 
range of time steps depending on the chosen spatial grid, the solution of the test
problem, $k_{ref}$, and the order of magnitude of the errors. For scalar problems and
time-stepping schemes with weak stability conditions, such as those considered in \cite{bdkmc},
this technique works rather well. In the case of systems of pde's and a high-order
conditionally stable scheme, such as the one at hand, one has to experiment
considerably; among other we found that the test problems should be chosen so that
the errors of all components of the system (here $\eta$ and $u$) are of the same order
of magnitude. The results of our experiments are shown in Table \ref{tbl3} for cubic 
and quintic splines on uniform and quasiuniform spatial meshes. The exact solution was 
taken now to be $\eta = \exp(-4t^{2})(x + \cos(\pi x))$, $u = \exp(-tx)\sin(\pi x)$, and
corresponding right-hand sides and initial conditions were found. The errors and temporal
rates at $T=1$ were computed with uniform mesh with $h=1/N$ and the quasiuniform mesh
defined in part (i) of this section. For each $M=T/k$ we show the modified $L^{2}$
error $E^{*}$ and the corresponding numerical temporal rate of convergence. In all cases
we took $M_{ref}=600$; the $L^{2}$ error of $V^{M_{ref}}(h,k_{ref})$ is denoted by 
$E_{ref}$. The fourth-order temporal convergence emerges in all cases. (The experiments
also gave fourth-order temporal convergence when RK4 was coupled with continuous,
piecewise linear spatial discretizations. In all cases the spatial grid was taken coarse 
enough so that the spatial errors not be too small.) \\ \noindent
(iii)\,\, \emph{Remarks on the validity of \eqref{eq212}} \par
In closing, we report on a few numerical experiments we performed in order to check the validity of the hypothesis \eqref{eq212} in the case of $C^{2}$ cubic $(r=4)$ and 
$C^{3}$ quartic $(r=5)$ splines. To this effect we computed the $H^{3}(0,1)$ error
$\|Pv - v\|_{3}$ for a $C^{\infty}$ function $v$ and a function that was $C^{2}$ 
and piecewise $C^{3}$, i.e. so that $v\in H^{3}$ but $v \notin H^{4}$, and found its
numerical rate of convergence as $h\to 0$ in the case of uniform and quasiuniform meshes.
In all cases we found that $\|Pv - v\|_{3}$ was of $O(h^{\alpha})$ with $\alpha >0$,
which suggests that $\|Pv\|_{3}\leq C(v)$ for a function $v$ that is at least in $H^{3}$.
In the case of a smooth $v$ (we took $v(x) = \sin (\pi x /2 + 1)$ the numerical rate of
convergence of $\|Pv - v\|_{3}$ was found to be optimal, i.e. equal to one for cubic
and to two for quartic splines, for uniform and quasiuniform meshes. (Results not shown.)
\par
We then experimented with the $C^{2}$ function whose third derivative is given by
\[
v'''(x) = 
\begin{cases}
\exp(x) \,\, ,   \,\, & \,\,\,\,\,\, 0\leq x < 1/4 \\
\sin(\pi x)\,\,\, , \,\, &   1/4 \leq x < 1/2, \\
\exp(-x) \,\, ,  \,\,  & 1/2 \leq x < 3/4,\\
\cos(\pi x) \,\, , \,\,&   3/4 \leq x \leq 1.
\end{cases}
\]
The grids that we considered were uniform with $h=1/N$ and quasiuniform with 
$x_{i+1} = x_{i} + h_{i}$, $1\leq i\leq N$, 
$h_{i}=3h/2$, if $i\equiv 0\mod 2$, and $h_{i}=h/2$, if $i\equiv 1\mod 2$, $N$ odd 
and $h = 2/(2N - 1)$. (We mainly took $N$ odd so that the discontinuities
of $v$ did not occur at meshpoints. For $N$ even we took $h=1/N$.)
\par
In Table \ref{tbl4} we show the results obtained in the case of cubic splines on a uniform grid
with odd $N$. The order of convergence $\alpha$ was found to be approximately equal to
$0.5$. (The table also shows the errors and rates of convergence for a variety of other
norms and seminorms.) The same rates of convergence were found (results not shown) in
the case of the quasiuniform grid with odd $N$. (In the case of even $N$ optimal-order 
results were found, i.e. $\alpha=1$, for both uniform and quasiuniform meshes.)
\par
In the case of quartic splines in all cases of uniform and quasiuniform meshes with odd
or even $N$ we observed, as expected, that $\alpha$ was approximately equal to 0.5, due
to the restricted regularity of $v$. We just show in Table \ref{tbl5} the results on the
quasiuniform grid with $N$ odd.
\normalsize \vspace{-3pt}
\subsection*{Acknowledgment} This work was partially supported by the project 
``Innovative Actions in Environmental Research and Development (PErAn)'' (MIS 5002358),
implemented under the ``Action for the strategic development of the Research and 
Technological sector'' funded by the Operational Program ``Competitiveness, 
and Innovation'' (NSRF 2014-2020) and cofinanced by Greece and the EU (European Regional 
Development Fund).
\normalsize
\bibliographystyle{amsplain} 

\begin{thebibliography}{00}
\bibitem{ad} D.C. Antonopoulos and V.A. Dougalis, Error estimates for the standard Galerkin-finite element method for the shallow water  equations, Math. Comp. {\bf{85}} (2016), 1143--1182.
\bibitem{bsw} G. Birkhoff, M.H. Schultz, and R.S. Varga, Piecewise Hermite interpolation in one
and two variables with applications to Partial Differential Equations,
Numer. Math. {\bf{11}} (1968), 232--256.
\bibitem{bdkmc} J.L. Bona, V.A. Dougalis, O.A. Karakashian, and W.R. McKinney,
Conservative, high-order numerical schemes for the generalized Korteweg-de Vries equation,
Phil. Trans. R. Soc. Lond. A {\bf{351}} (1995), 107--164.
\bibitem{bs} J.H. Bramble and A.H. Schatz, Estimates for spline projections,
RAIRO Anal. Num\'{e}r. {\bf{10}} (1976), 5--37.
\bibitem{bef} E. Burman, A. Ern, and M.A. Fernandez, Explicit Runge-Kutta schemes and
finite elements with symmetric stabilization for first-order linear pde systems,
SIAM J. Numer. Anal. {\bf{48}} (2010), 2019--2042.
\bibitem{debf} C. DeBoor and G.J. Fix, Spline approximation by Quasiinterpolants,
J. Approx. Theory {\bf{8}} (1973), 19--45.
\bibitem {ddw} J. Douglas, Jr., T. Dupont, and L. Wahlbin, Optimal $L^{\infty}$ error
estimates for Galerkin approximations to solutions of two-point boundary value problems,
Math. Comp. {\bf{29}} (1975), 475--483.
\bibitem{d1} T. Dupont, Galerkin methods for first order hyperbolics:
an example, SIAM J. Numer. Anal. {\bf{10}} (1973), 890--899.
\bibitem{d2} T. Dupont, Galerkin methods for modeling gas pipelines, Constructive and
computational methods for differential and integral equations (Sympos. Indiana Univ.,
Bloomington, Ind., 1974), Lecture Notes in Math., Vol. 430, Springer, Berlin, 1974, pp. 112--130.
\bibitem{kyk} E. J. Kubatko, B.A. Yeager, and D.I. Ketcheson, Optimal 
Strong-Stability Preserving Runge-Kutta time discretizations for Discontinuous Galerkin
methods, J. Sci. Computing, {\bf{60}} (2014), 313--344.
\bibitem{pt} M. Petcu and R. Temam, The one dimensional Shallow Water equations with
Dirichlet boundary conditions, Discr. Cont. Dyn. Syst., Series S, {\bf{4}} (2011),
209--222.
\bibitem{qz} J. Qiu and Q. Zhang, Stability, error estimate and limiters of Discontinuous
Galerkin methods, in {\emph{Handbook of Numerical Analysis}}, vol. 17, 
ed. by R. Abgrall and C.-W. Shu, Elsevier 2016, pp. 147--171.
\bibitem{schr} R. Schreiber, Finite element methods of high-order accuracy for singular
two-point boundary value problems with nonsmooth solutions,
SIAM J. Numer. Anal. {\bf{17}} (1980), 547--566.
\bibitem{so} C.-W. Shu and S. Osher, Efficient implementation of essentially 
non-oscillatory shock-capturing schemes, J. Comp. Phys. {\bf{77}} (1988), 439--471.
\bibitem{s} M.H. Schultz, Error bounds for polynomial spline interpolation,
Math. Comp. {\bf{24}} (1970), 507--515.
\bibitem{tw} V. Thom\'{e}e and L.B. Wahlbin, Maximum-norm stability and error estimates in
Galerkin methods for parabolic equations in one space variable,
Numer. Math. {\bf{41}} (1983), 345--371.
\bibitem{x} Y. Xing, Numerical methods for the nonlinear shallow water equations, 
in {\emph{Handbook of Numerical Analysis}}, vol. 18, ed. by R. Abgrall and C.-W. Shu,
Elsevier 2017, pp. 361--384.
\bibitem{zs1} Q. Zhang and C.-W. Shu, Error estimates to smooth solutions of Runge-Kutta 
Discontinuous Galerkin methods for scalar conservation laws, 
SIAM J. Numer. Anal. {\bf{42}} (2004), 641--646.
\bibitem{zs2} Q. Zhang and C.-W. Shu, Error estimates for the third order explicit
Runge-Kutta discontinuous Galerkin method for a linear hyperbolic equation in one-dimension
with discontinuous initial data, Numer. Math. {\bf{126}} (2014), 703--740. 
\end{thebibliography}

\begin{table}[b]
\begin{subtable}{1\textwidth}
\centering
\begin{tabular}{ | c | c  c | c  c | c  c | }\hline 
\rule{0pt}{10pt} $N$ & $L^{2}$ error &   rate & $L^{\infty}$ error & rate & $H^{1}$ seminorm  error &  rate\\ \hline 
\rule{0pt}{10pt} 160 & 1.1057e-06 &  -    & 2.4537e-06 &   -   & 5.8016e-04 &   -   \\ \hline
\rule{0pt}{10pt} 200 & 5.6700e-07 & 2.993 & 1.2600e-06 & 2.987 & 3.6898e-04 & 2.028 \\ \hline
\rule{0pt}{10pt} 240 & 3.2848e-07 & 2.994 & 7.2837e-07 & 3.006 & 2.5514e-04 & 2.024 \\ \hline
\rule{0pt}{10pt} 280 & 2.0700e-07 & 2.996 & 4.5857e-07 & 3.002 & 1.8686e-04 & 2.020 \\ \hline
\rule{0pt}{10pt} 320 & 1.3875e-07 & 2.996 & 3.0686e-07 & 3.008 & 1.4273e-04 & 2.017 \\ \hline
\rule{0pt}{10pt} 360 & 9.7479e-08 & 2.998 & 2.1566e-07 & 2.995 & 1.1256e-04 & 2.017 \\ \hline 
\rule{0pt}{10pt} 400 & 7.1102e-08 & 2.995 & 1.5726e-07 & 2.998 & 9.1058e-05 & 2.012 \\ \hline
\rule{0pt}{10pt} 440 & 5.3431e-08 & 2.998 & 1.1834e-07 & 2.983 & 7.5161e-05 & 2.013 \\ \hline
\end{tabular}
\caption*{(a)}\label{tb1a}
\end{subtable}
\begin{subtable}{1\textwidth}
\centering
\begin{tabular}{ | c | c  c | c  c | c  c | }\hline 
\rule{0pt}{10pt} $N$ & $L^{2}$ error &   rate & $L^{\infty}$ error & rate & $H^{1}$ seminorm  error &  rate\\ \hline 
\rule{0pt}{10pt} 160 & 2.3101e-08 &  -    & 4.9500e-08 &   -   & 1.1641e-05 &   -   \\ \hline
\rule{0pt}{10pt} 200 & 1.1881e-08 & 2.980 & 2.4909e-08 & 3.078 & 7.4840e-06 & 1.980 \\ \hline
\rule{0pt}{10pt} 240 & 6.8975e-09 & 2.983 & 1.4296e-08 & 3.046 & 5.2139e-06 & 1.982 \\ \hline
\rule{0pt}{10pt} 280 & 4.3513e-09 & 2.989 & 8.8425e-09 & 3.116 & 3.8374e-06 & 1.989 \\ \hline
\rule{0pt}{10pt} 320 & 2.9189e-09 & 2.990 & 5.8042e-09 & 3.153 & 2.9420e-06 & 1.990 \\ \hline
\rule{0pt}{10pt} 360 & 2.0516e-09 & 2.994 & 4.0743e-09 & 3.005 & 2.3263e-06 & 1.994 \\ \hline 
\rule{0pt}{10pt} 400 & 1.4972e-09 & 2.990 & 2.9627e-09 & 3.024 & 1.8863e-06 & 1.990 \\ \hline
\rule{0pt}{10pt} 440 & 1.1255e-09 & 2.994 & 2.2195e-09 & 3.030 & 1.5597e-06 & 1.994 \\ \hline
\end{tabular}
\caption*{(b)}\label{tb1b}
\end{subtable}
\caption{Spatial rates of convergence, cubic splines, quasiuniform mesh,
$T=1$, $\frac{k}{h}=\frac{1}{20}$, (a): $\eta$, (b): $u$} \label{tbl1}
\end{table}
\begin{table}[b]
\begin{subtable}{1\textwidth}
\centering
\begin{tabular}{ | c | c  c | c  c | c  c | }\hline 
\rule{0pt}{10pt} $N$ & $L^{2}$ error &   rate & $L^{\infty}$ error & rate & $H^{1}$ seminorm  error &  rate\\ \hline 
\rule{0pt}{10pt} 12 & 5.5379e-07 &  -    & 1.4510e-06 &   -   & 4.2901e-05 &   -   \\ \hline
\rule{0pt}{10pt} 18 & 4.7013e-08 & 6.083 & 1.2278e-07 & 6.091 & 4.7221e-06 & 5.442 \\ \hline
\rule{0pt}{10pt} 24 & 8.2765e-09 & 6.038 & 2.1654e-08 & 6.032 & 1.0096e-06 & 5.362 \\ \hline
\rule{0pt}{10pt} 30 & 2.1511e-09 & 6.038 & 5.6238e-09 & 6.042 & 3.0752e-07 & 5.327 \\ \hline
\rule{0pt}{10pt} 36 & 7.1581e-10 & 6.035 & 1.8738e-09 & 6.028 & 1.1680e-07 & 5.310 \\ \hline
\end{tabular}
\caption*{(a)}\label{tb2a}
\end{subtable}
\begin{subtable}{1\textwidth}
\centering
\begin{tabular}{ | c | c  c | c  c | c  c | }\hline 
\rule{0pt}{10pt} $N$ & $L^{2}$ error &   rate & $L^{\infty}$ error & rate & $H^{1}$ seminorm  error &  rate\\ \hline 
\rule{0pt}{10pt} 12 & 9.2535e-09 &  -    & 2.1916e-08 &   -   & 4.4551e-07 &   -   \\ \hline
\rule{0pt}{10pt} 18 & 7.8813e-10 & 6.075 & 1.8705e-09 & 6.070 & 5.7648e-08 & 5.043 \\ \hline
\rule{0pt}{10pt} 24 & 1.4005e-10 & 6.005 & 3.3366e-10 & 5.992 & 1.3670e-08 & 5.003 \\ \hline
\rule{0pt}{10pt} 30 & 3.6090e-11 & 6.077 & 8.7567e-11 & 5.995 & 4.4472e-09 & 5.032 \\ \hline
\rule{0pt}{10pt} 36 & 1.1975e-11 & 6.051 & 2.9254e-11 & 6.014 & 1.7807e-09 & 5.020 \\ \hline
\end{tabular}
\caption*{(b)}\label{tb2b}
\end{subtable}
\caption{Spatial rates of convergence, quintic splines $T=1$, uniform mesh,
$h = 1/N$, $k=10^{-4}$, (a): $\eta$, (b): $u$} \label{tbl2}
\end{table} 
\begin{table}[b]
\begin{subtable}{1\textwidth}
\centering
\begin{tabular}{ c | c  c | c c |}\cline{2-5}
& \multicolumn{2}{c|}{$\eta$}  & \multicolumn{2}{c |}{$u$} \\ \hline
\multicolumn{1}{|c|}{M}   &  $E^{*}$   \,& rate  & $E^{*}$    \,& rate \\ \hline
\multicolumn{1}{|c|}{110} & 2.5095e-08 \,&   -   & 2.3825e-08 \,&   -   \\ \hline
\multicolumn{1}{|c|}{115} & 2.1068e-08 \,& 3.934 & 1.9943e-08 \,& 4.001 \\ \hline
\multicolumn{1}{|c|}{120} & 1.7814e-08 \,& 3.942 & 1.6825e-08 \,& 3.994 \\ \hline
\multicolumn{1}{|c|}{125} & 1.5163e-08 \,& 3.947 & 1.4296e-08 \,& 3.990 \\ \hline
\multicolumn{1}{|c|}{130} & 1.2987e-08 \,& 3.950 & 1.2225e-08 \,& 3.990 \\ \hline
\multicolumn{1}{|c|}{135} & 1.1188e-08 \,& 3.950 & 1.0515e-08 \,& 3.992 \\ \hline
\multicolumn{1}{|c|}{140} & 9.6915e-09 \,& 3.949 & 9.0931e-09 \,& 3.996 \\ \hline
\multicolumn{1}{|c|}{145} & 8.4378e-09 \,& 3.948 & 7.9024e-09 \,& 4.000 \\ \hline
\multicolumn{1}{|c|}{150} & 7.3808e-09 \,& 3.948 & 6.8997e-09 \,& 4.002 \\ \hline
\multicolumn{1}{|c}{$E_{ref}$} & \multicolumn{4}{c|}{}\\ \hline
\multicolumn{1}{|c|}{600} & 7.6301e-09 \,&   -   & 4.9031e-09 \,&   -   \\ \hline
\end{tabular}
\caption*{(a) Cubic splines, uniform mesh, $N=60$}
\end{subtable}
\begin{subtable}{1\textwidth}
\centering
\begin{tabular}{ c | c  c | c c |}\cline{2-5}
& \multicolumn{2}{c|}{$\eta$}  & \multicolumn{2}{c |}{$u$} \\ \hline
\multicolumn{1}{|c|}{M}   &  $E^{*}$   \,& rate  & $E^{*}$    \,& rate \\ \hline
\multicolumn{1}{|c|}{105} & 3.0062e-08 \,&   -   & 2.8915e-08 \,&   -   \\ \hline
\multicolumn{1}{|c|}{110} & 2.4975e-08 \,& 3.985 & 2.4051e-08 \,& 3.959 \\ \hline
\multicolumn{1}{|c|}{115} & 2.0905e-08 \,& 4.002 & 2.0182e-08 \,& 3.946 \\ \hline
\multicolumn{1}{|c|}{120} & 1.7619e-08 \,& 4.018 & 1.7073e-08 \,& 3.930 \\ \hline
\multicolumn{1}{|c}{$E_{ref}$} & \multicolumn{4}{c|}{}\\ \hline
\multicolumn{1}{|c|}{600} & 2.2953e-06 \,&   -   & 8.2945e-07 \,&   -   \\ \hline
\end{tabular}
\caption*{(b) Cubic splines, quasiuniform mesh, $N=60$}
\end{subtable}
\begin{subtable}{1\textwidth}
\centering
\begin{tabular}{ c | c  c | c c |}\cline{2-5}
& \multicolumn{2}{c|}{$\eta$}  & \multicolumn{2}{c |}{$u$} \\ \hline
\multicolumn{1}{|c|}{M}  &  $E^{*}$   \,& rate  & $E^{*}$    \,& rate \\ \hline
\multicolumn{1}{|c|}{60} & 2.7218e-07 \,&   -   & 2.6114e-07 \,&   -   \\ \hline
\multicolumn{1}{|c|}{65} & 1.9786e-07 \,& 3.984 & 1.9000e-07 \,& 3.974 \\ \hline
\multicolumn{1}{|c|}{70} & 1.4716e-07 \,& 3.995 & 1.4164e-07 \,& 3.963 \\ \hline
\multicolumn{1}{|c|}{75} & 1.1167e-07 \,& 3.999 & 1.0776e-07 \,& 3.963 \\ \hline
\multicolumn{1}{|c|}{80} & 8.6261e-08 \,& 4.001 & 8.3416e-08 \,& 3.967 \\ \hline
\multicolumn{1}{|c|}{85} & 6.7679e-08 \,& 4.002 & 6.5559e-08 \,& 3.973 \\ \hline
\multicolumn{1}{|c|}{95} & 4.3353e-08 \,& 4.004 & 4.2123e-08 \,& 3.977 \\ \hline
\multicolumn{1}{|c|}{100} & 3.5312e-08 \,& 4.000 & 3.4364e-08 \,& 3.969 \\ \hline
\multicolumn{1}{|c}{$E_{ref}$} & \multicolumn{4}{c|}{}\\ \hline
\multicolumn{1}{|c|}{600} & 2.2956e-09 \,&   -   & 6.7454e-10 \,&   -   \\ \hline
\end{tabular}
\caption*{(c) Quintic splines, uniform mesh, $N=20$}
\end{subtable}
\begin{subtable}{1\textwidth}
\centering
\begin{tabular}{ c | c  c | c c |}\cline{2-5}
& \multicolumn{2}{c|}{$\eta$}  & \multicolumn{2}{c |}{$u$} \\ \hline
\multicolumn{1}{|c|}{M}   &  $E^{*}$   \,& rate  & $E^{*}$    \,& rate \\ \hline
\multicolumn{1}{|c|}{80}  & 8.5189e-08 \,&   -   & 8.6138e-08 \,&   -   \\ \hline
\multicolumn{1}{|c|}{85}  & 6.6830e-08 \,& 4.004 & 6.7616e-08 \,& 3.994 \\ \hline
\multicolumn{1}{|c|}{90}  & 5.3130e-08 \,& 4.013 & 5.3949e-08 \,& 3.950 \\ \hline
\multicolumn{1}{|c|}{95}  & 4.2745e-08 \,& 4.023 & 4.3591e-08 \,& 3.943 \\ \hline
\multicolumn{1}{|c|}{100} & 3.4762e-08 \,& 4.031 & 3.5620e-08 \,& 3.937 \\ \hline
\multicolumn{1}{|c|}{105} & 2.8548e-08 \,& 4.036 & 2.9397e-08 \,& 3.935 \\ \hline
\multicolumn{1}{|c}{$E_{ref}$} & \multicolumn{4}{c|}{}\\ \hline
\multicolumn{1}{|c|}{600} & 1.3611e-08 \,&   -   & 4.6738e-09 \,&   -   \\ \hline
\end{tabular}
\caption*{(d) Quintic splines, quasiuniform grid, $N=30$}
\end{subtable}
\caption{Temporal rates of convergence, $T=1$} 
\label{tbl3}
\end{table}
\scriptsize
\begin{table}[b]
\centering
\setlength\tabcolsep{4pt}
\begin{tabular}{ | c | l c | l c | l c | l c | l c | l c |}\hline
\rule{0pt}{6.2pt} $N$ & $\|Pv - v\|$ & order & $\abs{Pv-v}_{1}$ & order & $\abs{Pv-v}_{2}$ & order & $\abs{Pv-v}_{3}$ & order & $\|Pv-v\|_{3}$ & order & $\|Pv - v\|_{\infty}$ & order \\ \hline
9 & 3.38e-06 & - & 7.35e-06 & - & 1.30e-04 & - & 5.45e-03 & - & 2.66e-01 & - & 3.63e-01 & - \\ \hline
17 & 3.56e-07 & 3.542 & 2.25e-05 & 2.751 & 1.67e-03 & 1.856 & 1.76e-01 & 0.649 & 2.17e-01 & 0.811 & 1.10e-06 & 2.984\\ \hline
33    & 3.39e-08 & 3.546 & 4.20e-06 & 2.532 & 6.09e-04 & 1.525 & 1.25e-01 & 0.513 & 1.45e-01 & 0.607 & 1.48e-07 & 3.030\\ \hline
65    & 3.16e-09 & 3.498 & 7.76e-07 & 2.492 & 2.21e-04 & 1.493 & 8.91e-02 & 0.500 & 9.88e-02 & 0.564 & 1.93e-08 & 3.000\\ \hline
129   & 2.88e-10 & 3.496 & 1.40e-07 & 2.496 & 7.93e-05 & 1.498 & 6.32e-02 & 0.501 & 6.80e-02 & 0.545 & 2.47e-09 & 2.998\\ \hline
257   & 2.58e-11 & 3.498 & 2.51e-08 & 2.498 & 2.82e-05 & 1.499 & 4.48e-02 & 0.500 & 4.71e-02 & 0.532 & 3.13e-10 & 2.999\\ \hline
513   & 2.30e-12 & 3.499 & 4.45e-09 & 2.499 & 1.00e-05 & 1.499 & 3.17e-02 & 0.500 & 3.29e-02 & 0.522 & 3.94e-11 & 2.999\\ \hline
1025  & 2.04e-13 & 3.499 & 7.90e-10 & 2.499 & 3.54e-06 & 1.500 & 2.24e-02 & 0.500 & 2.30e-02 & 0.515 & 4.94e-12 & 3.000\\ \hline
2049  & 1.81e-14 & 3.500 & 1.40e-10 & 2.500 & 1.25e-06 & 1.500 & 1.59e-02 & 0.500 & 1.61e-02 & 0.511 & 6.18e-13 & 3.000\\ \hline
4097  & 1.60e-15 & 3.499 & 2.47e-11 & 2.500 & 4.44e-07 & 1.500 & 1.12e-02 & 0.500 & 1.14e-02 & 0.508 & 7.73e-14 & 3.001\\ \hline
\end{tabular}
\caption{Errors $Pv-v$ and order of convergence, non smooth $v$, 
cubic splines, uniform mesh}\label{tbl4}
\end{table}
\normalsize
\scriptsize
\begin{table}[b]
\centering
\setlength\tabcolsep{4pt}
\begin{tabular}{ | c | l c | l c | l c | l c | l c | l c |}\hline
\rule{0pt}{6.2pt} $N$ & $\|Pv - v\|$ & order & $\abs{Pv-v}_{1}$ & order & $\abs{Pv-v}_{2}$ & order & $\abs{Pv-v}_{3}$ & order & $\|Pv-v\|_{3}$ & order & $\|Pv - v\|_{\infty}$ & order \\ \hline
9 & 1.32e-06 & - & 3.24e-06 & - & 7.11e-05 & - & 5.86e-03 & - & 3.39e-01 & - & 4.46e-01 & - \\ \hline
17    & 1.71e-07 & 3.216 & 1.17e-05 & 2.841 & 1.19e-03 & 2.505 & 1.31e-01 & 1.497 & 1.60e-01 & 1.614 & 5.18e-07 & 2.881\\ \hline
33    & 1.63e-08 & 3.541 & 1.96e-06 & 2.692 & 2.68e-04 & 2.250 & 5.84e-02 & 1.216 & 6.73e-02 & 1.304 & 6.95e-08 & 3.029\\ \hline
65    & 1.46e-09 & 3.554 & 3.48e-07 & 2.546 & 9.31e-05 & 1.559 & 4.05e-02 & 0.539 & 4.47e-02 & 0.603 & 8.81e-09 & 3.046\\ \hline
129   & 1.31e-10 & 3.526 & 6.20e-08 & 2.519 & 3.30e-05 & 1.513 & 2.86e-02 & 0.505 & 3.07e-02 & 0.548 & 1.11e-09 & 3.022\\ \hline
257   & 1.16e-11 & 3.512 & 1.10e-08 & 2.509 & 1.17e-05 & 1.506 & 2.03e-02 & 0.502 & 2.13e-02 & 0.532 & 1.39e-10 & 3.010\\ \hline
513   & 1.03e-12 & 3.506 & 1.95e-09 & 2.504 & 4.14e-06 & 1.503 & 1.43e-02 & 0.501 & 1.48e-02 & 0.522 & 1.75e-11 & 3.005\\ \hline
1025  & 9.10e-14 & 3.503 & 3.45e-10 & 2.502 & 1.46e-06 & 1.501 & 1.01e-02 & 0.501 & 1.04e-02 & 0.515 & 2.19e-12 & 3.003\\ \hline
2049  & 8.05e-15 & 3.501 & 6.09e-11 & 2.501 & 5.17e-07 & 1.501 & 7.17e-03 & 0.500 & 7.29e-03 & 0.511 & 2.74e-13 & 3.001\\ \hline
4097  & 7.19e-16 & 3.486 & 1.08e-11 & 2.499 & 1.83e-07 & 1.500 & 5.07e-03 & 0.500 & 5.13e-03 & 0.507 & 3.42e-14 & 2.999\\ \hline
\end{tabular}
\caption{Errors $Pv-v$ and orders of convergence, non smooth $v$, 
quartic splines, quasi\-uniform mesh}\label{tbl5}
\end{table}
\end{document}